\documentclass[12pt, twoside]{article}

\usepackage{amsthm}
\usepackage{enumerate}

\usepackage{booktabs}
\usepackage{array}

\usepackage{amstext}
\usepackage[all]{xy}

\usepackage[toc,page]{appendix}

\usepackage{fancyhdr}
\usepackage{lastpage}
\usepackage{hyperref}
\usepackage{bbding}
\usepackage{geometry}

\usepackage{amscd}

\usepackage{mathrsfs}
\usepackage{amsfonts}
\usepackage{amssymb}
\usepackage{pifont}
\usepackage{amsmath}
\usepackage{mathrsfs}
\usepackage{enumerate}
\usepackage{amsmath}
\usepackage{mathrsfs}
\usepackage{amsfonts}
\usepackage{amsmath}
\usepackage{amscd}
\usepackage[all]{xy}

\usepackage{color}
\definecolor{DarkBlue}{rgb}{0.1,0.1,0.5}
\definecolor{Red}{rgb}{0.9,0.0,0.1}
\definecolor{Navy}{rgb}{0.00,0.00,0.30}
\definecolor{Yellow}{rgb}{1.00,1.00,0.00}
\definecolor{Gold}{rgb}{1.00,0.84,0.00}
\definecolor{Lightgoldenrod}{rgb}{0.93,0.87,0.51}
\definecolor{Goldenrod}{rgb}{0.85,0.65,0.13}
\definecolor{Black2}{rgb}{0.00,0.00,0.00}
\definecolor{orange}{rgb}{0.85,0.65,0.13}
\definecolor{SkyBlue}{rgb}{0.941176,0.972549,1.}
\definecolor{MyLightMagenta}{cmyk}{0.1,0.8,0,0.1}
\definecolor{MistyRose}{RGB}{255,228,225}
\definecolor{CornSilk}{RGB}{255,248,220}

\usepackage{tikz}
\usetikzlibrary{%
  calc,
  fit,
  shapes,
  backgrounds
}

\usepackage{verbatim}

\pagestyle{myheadings}
\def\titlerunning#1{\gdef\titrun{#1}}
\makeatletter
\def\author#1{\gdef\autrun{\def\and{\unskip, }#1}\gdef\@author{#1}}
\def\address#1{{\def\and{\\\hspace*{18pt}}\renewcommand{\thefootnote}{}%
\footnote {#1}}%
\markboth{\autrun}{\titrun}}
\makeatother
\def\email#1{e-mail: #1}
\def\subjclass#1{{\renewcommand{\thefootnote}{}%
\footnote{\emph{Mathematics Subject Classification (2010):} #1}}}
\def\keywords#1{\par\medskip
\noindent\textbf{Keywords.} #1}

\numberwithin{equation}{section}

\frenchspacing

\textwidth=15cm
\textheight=23cm
\parindent=16pt
\oddsidemargin=-0.5cm
\evensidemargin=-0.5cm
\topmargin=-0.5cm


\newcommand{\HH}{\mathrm{H}}
\newcommand{\pp}{\mathbf P^3}
\newcommand{\ch}{\mathrm{ch}}
\newcommand{\nab}{\nu_{\alpha,\beta}}

\newcommand{\db}{\overline {\Delta}}
\newcommand{\dt}{\widetilde{\Delta}}
\newcommand{\bp}{\mathrm{Bl}_{\mathrm{pt}} \mathbf{P}^3}
\newcommand{\Hom}{\mathrm{Hom}}
\newcommand{\ext}{\mathrm{ext}}

\newcommand{\ob}{\overline{\beta}}

\newcommand{\tv}{\widetilde{v}}
\newcommand{\czl}{\mathrm{ch}_0^{\overline{\beta}}(E)}

\newcommand{\czy}{\mathrm{ch}_1^{\overline{\beta}}(E)}

\newcommand{\cze}{\mathrm{ch}_2^{\overline{\beta}}(E)}

\newcommand{\czs}{\mathrm{ch}_3^{\overline{\beta}}(E)}

\newcommand{\czi}{\mathrm{ch}_i^{\overline{\beta}}(E)}

\newcommand{\chy}{\frac{H^2\mathrm{ch}_1}{H^3\mathrm{ch}_0}}

\newcommand{\che}{\frac{H\,\mathrm{ch}_2}{H^3\mathrm{ch}_0}}

\newcommand{\cccp}{\left\{1,\frac{H^2\mathrm{ch}_1}{H^3\mathrm{ch}_0},\frac{H\,\mathrm{ch}_2}{H^3\mathrm{ch}_0}\right\}}

\renewcommand\appendix{\par
  \setcounter{section}{0}
  \setcounter{subsection}{0}
  \setcounter{figure}{0}
  \setcounter{table}{0}
  \renewcommand\thesection{Appendix \Alph{section}}
  \renewcommand\thefigure{\Alph{section}\arabic{figure}}
  \renewcommand\thetable{\Alph{section}\arabic{table}}
}

\newtheorem{theorem}{Theorem}[section]
\newtheorem{defn}[theorem]{Definition}
\newtheorem{prop}[theorem]{Proposition}
\newtheorem{cor}[theorem]{Corollary}
\newtheorem{lemma}[theorem]{Lemma}
\newtheorem{rem}[theorem]{Remark}

\title{\textbf{Stability conditions on Fano threefolds of Picard number one}}
\author{Chunyi Li\\ University of Edinburgh}
\date{\today}







\begin{document}

\baselineskip=17pt


\titlerunning{The University of Edinburgh}

\title{Stability conditions on Fano threefolds of Picard number one}

\author{Chunyi Li}

\date{\today}

\maketitle

\address{School of Mathematics and Maxwell Institute, University of Edinburgh \and
\email{Chunyi.Li@ed.ac.uk}}
\subjclass{Primary 14F05; Secondary 14J45}

\begin{abstract}
We prove the conjectural Bogomolov-Gieseker type inequality for tilt slope stable objects on each Fano threefold $X$ of Picard number one. Based on the previous works \cite{BMT14}, \cite{BMS} and \cite{BMT} on Bridgeland stability conditions, this induces an open subset of geometric stability conditions on D$^b(X)$. We also get a new stronger bound for  Chern characters of slope semistable sheaves on $X$.
\keywords{stability condition, Fano threefolds, Bogomolov-Gieseker type inequality}
\end{abstract}


\section*{Introduction}
The notion of \emph{stability conditions} on a $\mathbb C$-linear triangulated category was introduced by Bridgeland in \cite{Bri07}. The existence of stability conditions on three-dimensional complex varieties is considered to be one of the central open problem in the study of Bridgeland stability conditions. To construct family of geometric stability conditions on a smooth $3$-fold, the general approach is to construct a new heart by tilting the original heart Coh($X$) twice, and then impose suitable central charge function. Following this approach, the main technical difficulty, as revealed by  the work \cite{BMT14} and \cite{BMT}, is to prove some conjectural Bogomolov-Gieseker type inequality that involves the third Chern character $\ch_3$. In special cases when the variety admits a complete exceptional collection, the conjectural Bogomolov-Gieseker type inequality has been proved: the \textbf P$^3$ case is set up in \cite{MacP3} by Macr\`{\i}, and the quadric $3$-fold case is solved in \cite{Sch} by Schmidt. The abelian $3$-fold of Picard number one case has been done in \cite{MP1} and \cite{MP2} by Maciocia and Piyaratne.

In the recent paper \cite{BMS} by Bayer, Macr\`{\i} and Stellari, the authors prove the existence of a stability condition for some $3$-folds including all abelian $3$-folds and Calabi-Yau $3$-folds obtained as a finite quotient of an abelian $3$-folds. In particular, as an important technical result, the authors show that when the polarization $\omega$ and the B-field $B$ are proportional to each other, or in particular, when the Picard number of the  variety is $1$, the general version conjectural Bogomolov-Gieseker type inequality, Conjecture 2.4 in \cite{BMS}, is equivalent to the small limit version Conjecture 5.3. In this paper, we prove the Conjecture 5.3 in \cite{BMS} for smooth Fano $3$-fold of Picard number one.\\

\textbf{Notations}: Let $X$ be a smooth Fano $3$-fold of Picard number one. The Picard group, Pic($X$), is generated by an ample divisor $H$. Let $\alpha>0$ and $\beta$ be two real numbers. We write ch$^{\beta H}_i(E)$ for the $\beta H$-twisted $i$-th Chern character ch$_i(E(-\beta H))$. Adopting the notation in \cite{BMS}, we write
\[\mathrm{Coh}_{\beta}(X):=\langle \mathcal T_{\beta}(X), \mathcal F_{\beta}(X)[1]\rangle,\]
 where $(\mathcal T_{\beta}(X), \mathcal F_{\beta}(X))$ is the torsion pair in $\mathrm{Coh}(X)$ given by:
\begin{center}
$\mathcal T_{\beta}(X):=\langle$ torsion sheaves; torsion-free slope stable sheaves $F$ with $\ch^{\beta H}_1(F)>0\rangle$;\\
$\mathcal F_{\beta}(X):=\langle$ torsion-free slope stable sheaves $F$ with $\ch^{\beta H}_1(F)\leq 0\rangle$.
\end{center}

The \textit{reduced central charge} $\overline {Z}_{\alpha,\beta}$ is defined as follows:

\[\overline {Z}_{\alpha,\beta} (E)= 3\alpha H^2 \ch^{\beta H}_1(E)+i\left(\sqrt{3}H\; \ch^{\beta H}_2(E)-\frac{\sqrt{3}}{2}\alpha^2 H^3 \ch^{\beta H}_0(E)\right).\]
When $\Re\overline{Z}_{\alpha,\beta} (E)\neq 0$,
the \emph{tilt slope function} $\nu_{\alpha,\beta}$ is defined to be the slope $\frac{\Im \overline{Z}_{\alpha,\beta}}{\Re \overline{Z}_{\alpha,\beta}}$ of the reduced central charge. Otherwise,  $\nu_{\alpha,\beta}$ is defined to be $+\infty$.
An object $E$ in Coh$_{\beta}(X)$ is $\nu_{\alpha,\beta}$ \emph{tilt-(semi)stable} if for all non-trial subobject $F\hookrightarrow E$ in Coh$_{\beta}(X)$, one has $\nab(F)<(\leq)\nab(E/F)$.

The real number $\ob(E)$ is defined as

\begin{equation*}
        \ob(E) = \begin{cases}
                        \frac{H\; \ch_2(E)}{H^2\ch_1(E)}  & \text{ if $\ch_0(E) = 0$,} \\
                        \frac{H^2\ch_1(E)-\sqrt{\db_H(E)}}{H^3\ch_0(E)} & \text{ if $\ch_0(E)\neq 0$.}
                    \end{cases}
\end{equation*}
We call an object $E\in D^b(X)$ to be \emph{$\ob$-stable} if there exists an open neighborhood $U\subset \mathbb R^2$ of $(0,\ob(E))$ such that for any $(\alpha,\beta)\in U$ with $\alpha>0$, either $E$ or $E[1]$ is a $\nab$ tilt-stable object of Coh$_\beta(E)$.
\begin{theorem}[Conjecture 5.3 in \cite{BMS}]
Let $X$ be a smooth Fano $3$-fold of Picard number $1$, and $E\in D^b(X)$ be a $\ob$-stable object, then
\[\ch_3^{\ob(E) H}(E)\leq 0.\]
\end{theorem}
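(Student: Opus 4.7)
The plan is to extend the strategies of Macrì (for $\mathbf{P}^3$) and Schmidt (for the smooth quadric) to all smooth Fano $3$-folds of Picard number one, combining an Euler characteristic computation via Hirzebruch--Riemann--Roch with Hom vanishings coming from tilt-stability and Serre duality ($-K_X = i_X H$, with index $i_X \in \{1,2,3,4\}$). Because the remaining cases (index $1$ and $2$) do not admit a full exceptional collection of line bundles, the key new ingredient is an improved Bogomolov--Gieseker type bound on the Chern characters of $\mu$-semistable torsion-free sheaves on $X$, strictly stronger than $\overline{\Delta}_H \geq 0$.

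First I would carry out standard reductions. Twisting by $\mathcal{O}(kH)$ shifts $\ob(E)$ by $k$ and derived dualization flips the sign of $\ch_0$, so we may assume $\ob(E) \in [0,1)$ and $\ch_0(E) \geq 0$. The torsion case $\ch_0(E) = 0$ follows immediately from the classical Bogomolov inequality applied to the Jordan--Hölder factors, so the substance lies in $\ch_0(E) > 0$, where $\ob$-stability guarantees that $E$ is $\nab$-tilt-stable in a small punctured neighborhood of $(0,\ob(E))$. Next, I would establish the improved sheaf bound: for every $\mu$-semistable torsion-free sheaf $F$ on $X$, the pair $\left(\chy(F), \che(F)\right)$ is confined to a region strictly smaller than that cut out by $\overline{\Delta}_H(F) \geq 0$. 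The mechanism is to pair $F$ against line bundles $\mathcal{O}(aH)$ for integer $a$ close to the $H$-slope of $F$: Hirzebruch--Riemann--Roch gives an explicit formula for $\chi(\mathcal{O}(aH), F)$, slope semistability forces $\Hom(\mathcal{O}(aH), F) = 0$ when $aH^3 > H^2\ch_1(F)/\ch_0(F)$, and Serre duality with $K_X = -i_X H$ converts $\mathrm{Ext}^3(\mathcal{O}(aH), F)$ into another such Hom group that vanishes for symmetric reasons. Juggling these vanishings against the Todd contribution yields the refined bound on $H\ch_2(F)$ in terms of $\ch_0(F)$ and $H^2\ch_1(F)$.

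With the sheaf bound in hand, I finish essentially as in the $\mathbf{P}^3$ case. Hirzebruch--Riemann--Roch presents $\chi(\mathcal{O}(aH), E)$ as $\ch_3^{\ob(E)H}(E)$ plus polynomial corrections in $a$, $\ch_0(E)$, $H^2\ch_1(E)$, $H\ch_2(E)$. The $\ob$-stability of $E$, together with the improved sheaf bound applied to the cohomology sheaves of $E$ (which controls where they can sit in the tilt heart), forces $\Hom(\mathcal{O}(aH), E) = 0$ and, via Serre duality, $\mathrm{Ext}^3(\mathcal{O}(aH), E) = 0$ for a suitable integer $a$. Non-negativity of the remaining $\mathrm{Ext}^i$ dimensions then converts $\chi(\mathcal{O}(aH), E) \leq 0$ into the desired $\ch_3^{\ob(E)H}(E) \leq 0$.

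The main obstacle, and the real heart of the paper, is producing the improved sheaf bound in a form tight enough that such an integer $a$ actually exists, uniformly across all smooth Fano $3$-folds of Picard number one. The index $1$ cases are the most delicate: the Serre twist $\mathcal{O}(-i_X H)$ is then closest to $\mathcal{O}$, leaving the least room to place $a$, so the admissible region for $(\chy, \che)$ in the $\mu$-semistable locus must be pinned down very precisely. I expect this step to require a careful interplay of Hirzebruch--Riemann--Roch, slope stability restricted to general surface sections of $X$, and induction on the rank $\ch_0$, with possibly some case-by-case verification against the classification of such $X$.
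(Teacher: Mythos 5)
Your overall skeleton --- Hirzebruch--Riemann--Roch plus Hom-vanishings from stability and Serre duality, with a strengthened Bogomolov--Gieseker-type bound as the new ingredient needed for low index --- matches the paper, and you correctly locate the heart of the matter in that strengthened bound. But the mechanism you propose for proving it does not work. You want to derive the improved bound for a $\mu_H$-semistable sheaf $F$ by computing $\chi(\mathcal O(aH),F)=h^0-h^1+h^2-h^3$, killing $h^0$ by slope semistability and $h^3=\hom(F,\mathcal O((a-r)H))$ by Serre duality. This leaves $h^2=\ext^2(\mathcal O(aH),F)=\ext^1(F,\mathcal O((a-r)H))$, which enters $\chi$ with a \emph{positive} sign and is not controlled by slope semistability at all, so no inequality $\chi\leq 0$ follows; the remark after Corollary \ref{cor:noslstabinRd} flags exactly this (on a Fano surface the middle Ext vanishes for free by Serre duality, on a threefold one genuinely needs tilt-stability to kill it). There is a second, independent obstruction: $\chi(\mathcal O(aH),F)$ contains $\ch_3(F)$ with coefficient $1$, and no non-negative combination of inequalities of this shape can eliminate that term, so pairing against line bundles can never yield a constraint purely on $\left(\ch_0,H^2\ch_1,H\ch_2\right)$, which is what your region bound must be. The same confusion surfaces in your final step, where you ask for $\mathrm{Ext}^3(\mathcal O(aH),E)=0$; that group enters $\chi$ with a negative sign and is harmless --- the group that must vanish is $\mathrm{Ext}^2(\mathcal O(aH),E)\cong\Hom(E,\mathcal O((a-r)H)[1])^*$, and it is tilt-stability of the shifted line bundle, not slope stability of cohomology sheaves, that kills it.

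The paper's actual route through Proposition \ref{lemma:bgcone} avoids both problems by pairing the object \emph{with itself}: among all tilt-stable objects whose reduced character lies in the forbidden region $R_{\frac{3}{2rd}}$ it picks one of minimal discriminant, shows by a wall-crossing argument that it (or a shift) stays tilt-stable away from the vertical wall, and then produces a single tilt slope function for which both $E$ and $E(-rH)[1]$ are stable with $\nab(E(-rH)[1])<\nab(E)$, whence $\ext^2(E,E)=\hom(E,E(-rH)[1])=0$ and $\chi(E,E)\leq 1$. Since $\chi(E,E)=-\frac{r}{2d}\db_H(E)+\frac{1}{d}H^3\ch_0(E)\ch_0(E)$ --- the $\ch_3$ contributions cancel identically in the self-pairing --- this forces $\widetilde\Delta_H(E)=\left(1-\mathrm{rk}(E)^{-2}\right)\frac{2}{rd}\geq\frac{3}{2rd}$, the required contradiction. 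Two further gaps in your sketch: the case $\ob(E)\in\mathbb Z$, where $\Hom(E,\mathcal O[1])$ need not vanish, requires a separate argument (the paper iteratively extends $E$ by $\mathcal O$ to push its reduced character onto the boundary of $R_{\frac{3}{2d}}$), and your claim that the torsion case follows ``immediately from the classical Bogomolov inequality'' is unjustified, since that inequality says nothing about $\ch_3$.
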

By abuse of notations, we will write $\czi$ instead of $\ch_i^{\ob(E) H}(E)$ for short.

Combined with the results in \cite{BMT14} and \cite{BMS}, we get an open subset of \emph{geometric stability conditions} on Stab$(X)$. To make this precise, the second torsion pair ($\mathcal T_{\alpha,\beta}$, $\mathcal F_{\alpha,\beta}$) is defined as follows:
\begin{center}
$\mathcal T_{\alpha,\beta}(X):=\langle$ $\nab$ tilt-slope stable objects $F\in\mathrm{Coh}_\beta(X)$ with $\nab(F)> 0 \rangle$;\\
$\mathcal F_{\alpha,\beta}(X):=\langle$ $\nab$ tilt-slope stable objects $F\in\mathrm{Coh}_\beta(X)$ with $\nab(F)\leq 0 \rangle$.
\end{center}
The heart $\mathcal A_{\alpha,\beta}:=\langle \mathcal T_{\alpha,\beta}(X),\mathcal F_{\alpha,\beta}[1]\rangle$ and the central charge $Z^{a,b}_{\alpha,\beta}$ is
\[\left( -\ch^\beta_3+bH\ch^\beta_2+aH^2\ch^\beta_1\right)+i\left(H\ch_2^\beta-\frac{\alpha^2}{2}H^3\ch^\beta_0\right).\]

Here the coefficients $a$ and $b$ satisfy the inequality $a>\frac{\alpha^2}{6}+\frac{1}{2}|b|\alpha$.
\begin{cor}
The pair $(Z^{a,b}_{\alpha,\beta}, \mathcal A_{\alpha,\beta})$ is a geometric stability condition on $\mathrm D^b(X)$: skyscraper sheaves of points on $X$ are stable with the same phase.
\end{cor}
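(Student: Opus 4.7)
The plan is to deduce this corollary from the just-proved Theorem by invoking the general machinery of \cite{BMT14} and \cite{BMS}. That framework already constructs the candidate pair $(Z^{a,b}_{\alpha,\beta}, \mathcal A_{\alpha,\beta})$ via the standard double-tilt and reduces verification of the stability-condition axioms to a single Bogomolov-Gieseker-type inequality involving $\ch_3$; the only ingredient missing for Fano threefolds of Picard rank one was exactly that inequality, which the Theorem now supplies.

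First I would verify the positivity axiom $Z^{a,b}_{\alpha,\beta}(E)\in \mathbb R_{<0}\cup\{z:\Im z>0\}$ for every nonzero $E\in \mathcal A_{\alpha,\beta}$. By construction of the second tilt, the only problematic objects are extensions of $\nab$ tilt-stable $E$ with $\nab(E)\in\{0,+\infty\}$: for these, $\Im Z^{a,b}_{\alpha,\beta}(E)=0$, and one must show that the real part is strictly negative. But such $E$ (possibly after shifting) is $\ob$-stable with $\ob(E)=\beta$ on the relevant numerical locus, and the Theorem yields $\ch_3^{\ob H}(E)\leq 0$. A routine numerical computation, already carried out in \cite{BMS}, then shows that combined with the constraint $a>\frac{\alpha^2}{6}+\frac{1}{2}|b|\alpha$ and the classical Bogomolov inequality $\db_H(E)\geq 0$, this forces
\[
-\ch_3^{\beta H}(E)+bH\ch_2^{\beta H}(E)+aH^2\ch_1^{\beta H}(E)<0.
\]
This is the main step and where all nontrivial content lives; the strict inequality is what requires the slightly wasteful constraint on $(a,b)$.

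The remaining axioms are then essentially formal. The Harder--Narasimhan property follows from the noetherianity of $\mathcal A_{\alpha,\beta}$ and discreteness of $\Im Z^{a,b}_{\alpha,\beta}$ by the standard Bridgeland argument, and the support property follows from a quadratic form obtained by combining $\db_H\geq 0$ with the Theorem, exactly as in \cite{BMS}. Finally, for geometricity: any skyscraper $k(x)$ is a torsion sheaf, hence lies in $\mathcal T_\beta$; its Chern character is $(0,0,0,1)$, so $\nab(k(x))=+\infty$, placing it in $\mathcal T_{\alpha,\beta}\subset \mathcal A_{\alpha,\beta}$, with $Z^{a,b}_{\alpha,\beta}(k(x))=-1$ for all parameters. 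Thus all skyscrapers have phase $1$ and are stable: any destabilizing subobject in $\mathcal A_{\alpha,\beta}$ would have nonzero imaginary part by the positivity just established, contradicting the real-valued central charge of $k(x)$. The hard part, as noted, is entirely absorbed into the Theorem; everything else is bookkeeping from \cite{BMT14} and \cite{BMS}.
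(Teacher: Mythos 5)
Your proposal is correct and follows the same route as the paper, which offers no separate proof of this corollary beyond citing \cite{BMT14} and \cite{BMS}: the general machinery there reduces everything to the Bogomolov--Gieseker-type inequality supplied by the main theorem (via the equivalence of Conjecture 5.3 with Conjecture 2.4 in \cite{BMS} when $\omega$ and $B$ are proportional). Your additional sketch of the positivity, Harder--Narasimhan, support, and skyscraper-stability checks is consistent with how that machinery operates.
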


\textbf{The idea of the proof:} The idea of the proof is based on two main observations. The first is to visualize the tilt slope function via the kernel of its reduced central charge. In this way, we may compare the $\nab$-slope of tilt-stable objects by the `positions' of their Chern characters and the kernel of $\overline{Z}_{\alpha,\beta}$. Based on this observation, when $0<\ob(E)<1$, we have Hom$(\mathcal O(mH),E)=0$ for $m>0$ and Hom$(E, \mathcal O(mH)[1])=0$ for $m\leq 0$. Together with the Serre duality, we get inequalities such as $\chi(\mathcal O(H),E)\leq 0$.

The second observation is that the Todd classes of a Fano variety are `positive' in some sense. Suppose $0< \ob(E)<1$, then when we compute $\chi(O(H),E)$ or $\chi(O(2H),E)$ in terms of $H^{3-i}\czi$, the coefficients of $H^2\czy$ and $H^3\czl$ are expected to be non-negative. Together with the first observation, we get a bound for $\czs$.

This idea works almost directly for Fano $3$-folds with index greater than or equal to two. In the index one case, the inequality $\chi(\mathcal O(H),E)\leq 0$ is not enough to give the claim, because the rest term in the expansion of $\chi(\mathcal O(H),E)$, which involves  $H^2\czy$ and $H^3\czl$, may be negative. To solve this problem, we impose a stronger bound for the Chern characters of $\nab$ tilt-stable objects. In particular, we get a new stronger bound for slope stable sheaves on Fano $3$-folds of Picard number one:
\begin{theorem}[Corollary \ref{cor:noslstabinRd}]
Let $X$ be a Fano $3$-fold of Picard number one, then for any
$\mu_H$-slope stable coherent sheaf $F$ on $X$, the pair $\widetilde v_H(F)$, which is $\left(1,\chy(F),\che(F)\right)$, is not inside $R_{\frac{3}{2rd}}$.
\label{thm:bndforstabshfinintro}
\end{theorem}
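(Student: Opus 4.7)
The proof will be a short reduction from the tilt-stability statement that forms the technical heart of the paper. The strategy has two ingredients.

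First, I would invoke the standard large-volume comparison (see \cite{BMT14}): a $\mu_H$-slope stable torsion-free coherent sheaf $F$ on $X$ lies in $\mathcal T_\beta(X)\subset$ Coh$_\beta(X)$ for any $\beta<\mu_H(F)$, and is $\nab$-tilt-stable for $\alpha$ sufficiently large. Indeed, at very large $\alpha$, any destabilizing sub- or quotient object in Coh$_\beta(X)$ would force, by taking Harder--Narasimhan filtrations and projecting back to Coh$(X)$, a slope-destabilizing subsheaf of $F$, contradicting slope stability. So one may place $F$ in a $\nab$ tilt-stability chamber by choosing $\beta$ just below $\mu_H(F)$ and $\alpha\gg 0$.

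Second, I would apply the main tilt-stability bound proved earlier in the paper -- namely, that every $\nab$ tilt-semistable object $E\in$ Coh$_\beta(X)$ has $\widetilde v_H(E)\notin R_{\frac{3}{2rd}}$ -- to this $F$. Since $\widetilde v_H$ depends only on the Chern character of $F$, not on $(\alpha,\beta)$, the conclusion transfers directly. The only checks are that $\Re\zbab(F)>0$ at the chosen $(\alpha,\beta)$, which is immediate from the dominance of the $-\tfrac{\sqrt 3}{2}\alpha^2 H^3\ch_0(F)$ term in $\Im\zbab(F)$ at large $\alpha$, and that the values of $(r,d)$ defining the forbidden region match those attached to $F$ (rather than any $\beta H$-twisted version).

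The main potential obstacle is not in the reduction itself but in ensuring that the statement for $\nab$ tilt-semistable objects is invariant under the affine change of variables $\widetilde v_H\mapsto \widetilde v_H-(0,\beta,\tfrac{\beta^2}{2})$ induced by tensoring with $\mathcal O(\beta H)$, so that the forbidden region $R_{\frac{3}{2rd}}$ is the same whether phrased in terms of $(\ch_0,\chy,\che)$ or $(\ch_0^{\beta H},\chy^{\beta H},\che^{\beta H})$. Once that bookkeeping is in place, the corollary is immediate. The genuinely hard content lies, as advertised in the introduction, in proving the underlying tilt-stability statement about $R_{\frac{3}{2rd}}$ -- which combines the kernel-of-$\zbab$ visualization of the tilt slope with the positivity of the Todd class to rule out destabilizing walls in the forbidden strip.
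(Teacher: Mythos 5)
Your reduction is correct and is exactly the argument the paper intends: Corollary \ref{cor:noslstabinRd} is stated immediately after Proposition \ref{lemma:bgcone} precisely because a $\mu_H$-slope stable sheaf $F$ of positive rank is $\nu_{\alpha,\beta}$ tilt-stable for $\beta<\mu_H(F)$ and $\alpha\gg 0$ (the large-volume limit), so the proposition applies verbatim to $\widetilde v_H(F)$. The bookkeeping worries you raise are harmless, since the proposition is already phrased in terms of the untwisted $\widetilde v_H$ and the region $R_{\frac{3}{2rd}}$ never changes with $(\alpha,\beta)$.
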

The region $R_{\frac{3}{2rd}}$ is defined in Definition \ref{def:rm}.\\

\textbf{Organization:} In Section \ref{sec1.1}, we collect some classification results of smooth Fano $3$-folds of Picard number one. We will actually use only the bound on the index and degree of Fano $3$-folds but not any other details from the classification results. In particular, we compute the explicit Hirzebruch-Riemann-Roch formula. In Section \ref{sec1.2}, we collect some notations for the stability conditions and set up the picture of the $\cccp$-plane to compare slopes of objects. In Section \ref{sec2}, we prove the conjecture for  Fano $3$-folds of index $2$. In Section \ref{sec3}, we give the proof for Theorem \ref{thm:bndforstabshfinintro} and the main theorem in the index $1$ case. 

\textbf{Other threefolds:} Although it is possible to generalize the proof  to all Fano $3$-folds, several problems may appear in the general situation. For the higher Picard number case, Conjecture 5.3 in \cite{BMS}  does not imply the existence of an open subset of stability conditions. One needs to treat the case when the polarization and the B-field are not proportional. Even in the case that $\omega$ and $B$ are both proportional to a divisor $H$, the second Todd class may not be parallel to $H^2$, therefore the term with $\ch^{\ob_H}_1(E)$ may be negative and hard to control. Meanwhile, depending on $\ob_H(E)$, there are few candidate sheaves $F$ such that $\chi(F,E)\leq 0$. These facts will make the computation very complicated.

For a more interesting case, the Calabi-Yau threefold of Picard number one, one of the key point fails. Suppose $0<\ob(E)<1$, by comparing the slope, we only have Hom $(\mathcal O(H),E)=$ Hom $(E,\mathcal O[1])=0$, so $\chi(\mathcal O(H),E)$ and other $\chi(F,E)$ may be positive.

\textbf{Acknowledgments.} The author is grateful to Arend Bayer and Xiaolei Zhao for very useful comments on the proof of Proposition \ref{lemma:bgcone} and their kind help on revising the paper. The author would also like to thank Mingmin Shen, Kenny Wong for helpful conversations. The author is supported by ERC starting grant no. 337039 ``WallXBirGeom''.


\section{Fano threefolds of Picard number one and Notations Recollection}
In this section, we collect some known facts about smooth Fano $3$-folds of Picard number one. For further details, we refer to \cite{Cut,IP,Shen}.

Let $X$ be a $3$-dimensional smooth projective variety
of Picard number one over $\mathbb C$. The Picard group has a unique \emph{ample generator} $H$. The variety $X$ is
\emph{Fano} in the sense that its anti-canonical divisor $-K_X$ is
ample. The
\emph{index} of $X$, denoted by $r$, is the positive integer such
that  $-K_X=rH$. Let $d$ be $H^3$, and we call it the \emph{degree} of $X$. When the index of $X$ is $1$, $d$ is an even number
$2g-2$, where the positive integer $g$ is called the \emph{genus} of
$X$.

\subsection{Classification of Fano threefolds of Picard number one and HRR formula}\label{sec1.1}

The smooth Fano $3$-folds of Picard number one have been completely classified in \cite{IP,Shen}. The detailed list of Fano $3$-folds requires some notations and properties that are not
closely related to the argument in this paper. We only summarize
the range of the index and degree, which plays an important role
in our arguments.
\begin{theorem}[\cite{IP}]
Let $X$ be a smooth Fano $3$-fold of Picard number one and other notations
as above, then the index $r$ is less than or equal to $4$. In addition,
\begin{itemize}
\item if $r=4$, then $X$ is $\pp$;
\item if $r=3$, then $X$ is a quadric hypersurface in $\mathbf P^4$;
\item if $r=2$, then the degree $d$ is a positive integer less than or equal to $5$;
\item if $r=1$, then the degree $d$ is a positive even integer less than or
equal to $22$, and $d$ cannot be $20$.
\end{itemize}
\label{thm:pic1fano3fold}
\end{theorem}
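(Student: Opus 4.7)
The classification decomposes by the value of the index $r$, in increasing order of difficulty. For the index bound $r\leq 4$ and the extremal cases $r=3,4$, I would invoke the Kobayashi-Ochiai theorem: on a smooth projective $n$-fold $X$ with $-K_X = rH$ and $H$ ample, one has $r\leq n+1$, with equality if and only if $(X,H)\cong (\mathbf{P}^n, \mathcal{O}(1))$, and $r=n$ if and only if $X$ is isomorphic to a smooth quadric hypersurface in $\mathbf{P}^{n+1}$. Taking $n=3$ settles both extremal cases at once.

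For $r=2$ (the \emph{del Pezzo threefolds}) the plan is to analyze the linear system $|H|$. Using Kodaira vanishing together with Riemann-Roch one computes $h^0(X,H) = d+2$, and a base-point-free / Reider-type argument shows that $|H|$ is very ample, embedding $X\hookrightarrow \mathbf{P}^{d+1}$ as a variety of degree $d$. A generic smooth hyperplane section is then a smooth del Pezzo surface of degree $d$; since such surfaces only exist for $d\leq 9$, the further constraints coming from the projective normality of the threefold and from $(-K_X)^3 = 8d$ cut the bound down to $d\leq 5$. A finite case check against existing constructions (cubic threefold, intersection of two quadrics, quintic del Pezzo, etc.) confirms that all values $1\leq d\leq 5$ actually occur.

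The main obstacle is the $r=1$ case. Writing $d = 2g-2$, the first step is Iskovskikh's theorem that $|-K_X|$ is very ample for $g\geq 3$, embedding $X\hookrightarrow \mathbf{P}^{g+1}$ as a projectively Cohen-Macaulay threefold of degree $2g-2$. The bound $g\leq 12$ and the exclusion of $g=11$ are both subtle. Following Mukai, for $g\geq 6$ one constructs a natural rigid vector bundle on $X$ whose global sections realize $X$ as a complete linear section of a homogeneous variety, and the classification of such ambient varieties forces $g\in\{6,7,8,9,10,12\}$; the exclusion of $g=11$ falls out as the byproduct that no homogeneous variety of the right type exists in that genus. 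The remaining low-genus range $g\leq 5$ is handled directly, realizing $X$ as a hypersurface or complete intersection in a suitable (possibly weighted) projective space. The hard part is the construction and rigidity of the Mukai vector bundles, which requires delicate deformation and moduli arguments; once this input is available the classification and the parity/boundedness statements follow cleanly.
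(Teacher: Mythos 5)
The paper offers no proof of this statement: it is quoted directly from the literature (Iskovskikh--Prokhorov, cited as \cite{IP}), and the author explicitly remarks that only the numerical output --- the bounds on the index $r$ and the degree $d$ --- is used in the rest of the argument. So there is no in-paper proof to compare yours against; what you have written is a roadmap of the actual classification, which is a deep body of work rather than something one proves in a page.

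As a roadmap your decomposition is the standard one (Kobayashi--Ochiai for $r\geq 3$, the del Pezzo threefold analysis for $r=2$, Iskovskikh/Mukai for $r=1$), but several individual steps would fail as stated. For $r=2$, the claim that $|H|$ is very ample is false for $d=1$ (where $|H|$ has a base point and $X$ is a degree $6$ hypersurface in weighted projective space $\mathbf P(1,1,1,2,3)$) and for $d=2$ (where $|H|$ defines a double cover of $\mathbf P^3$, not an embedding); moreover the bound $d\leq 9$ coming from hyperplane sections being del Pezzo surfaces does not by itself give $d\leq 5$ --- one must separately rule out $d=6,7,8$, which is done by showing those del Pezzo threefolds all have Picard number at least $2$ (or, for $d=8$, that the ample generator has index $4$). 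For $r=1$, anticanonical very ampleness likewise fails in the hyperelliptic cases at low genus, which need separate treatment, and the genuinely hard content --- the construction and rigidity of the Mukai bundles for $g\geq 6$, or alternatively Iskovskikh's double-projection method, either of which is what actually forces $g\leq 12$ and excludes $g=11$ --- is named but not supplied. Citing \cite{IP}, as the paper does, is the right move here; as a self-contained proof your proposal has real gaps at exactly the points where the classification is difficult.
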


Let $E$ and $F$ be objects in D$^b(X)$, the bounded
derived category of coherent sheaves on $X$. We write ch$_i(E)$ for the $i$-th Chern character of $E$, $i=0,1,2,3$. The Euler character $\chi(E,F)$ is defined to be the alternating sum
\[\sum_{i\in \mathbb Z} (-1)^i \mathrm{ hom} (E,F[i])\]
as usual, and $\chi(E):=\chi(\mathcal O,E)$. The Hirzebruch-Riemann-Roch formula is crucial in the proof of Theorem
\ref{thm:mainthmfanostab}.
\begin{lemma} [Hirzebruch-Riemann-Roch formula]
Let $X$ be a smooth Fano $3$-fold of Picard number one, index
$r$ and degree $d$. The Hirzebruch-Riemann-Roch formula is given as follows:
\begin{itemize}
\item $r=4$: $X$ $=$ $\pp$, $H$ $=$ $[\mathcal O(1)]$.
\[\chi(E)=\ch_3(E)+2H\,\ch_2(E)+\frac{11}{6}H^2\ch_1(E)+H^3\ch_0(E).\]
\item $r=3$: $X$ $=$ $Q$ $\subset$ $\mathbf P^4$, $H$ $=$ $[\mathcal O_{\mathbf P^4}(1)|_Q]$.
\[\chi(E)=\ch_3(E)+\frac{3}{2}H\,\ch_2(E)+\frac{13}{12}H^2\ch_1(E)+\frac{1}{2}H^3\ch_0(E).\]
\item $r=2$: $H=-\frac{1}{2}K_X$.
\[\chi(E)=\ch_3(E)+H\,\ch_2(E)+(\frac{1}{3}+\frac{1}{d})H^2\ch_1(E)+\frac{1}{d}H^3 \ch_0(E).\]
\item $r=1$: $H=-K_X$.
\[\chi(E)=\ch_3(E)+\frac{1}{2}H\,\ch_2(E)+(\frac{1}{12}+\frac{2}{d})H^2\ch_1(E)+\frac{1}{d}H^3\ch_0(E).\]
\end{itemize}
\label{lemma:hrr}
\end{lemma}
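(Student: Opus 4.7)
The plan is to apply the Hirzebruch--Riemann--Roch theorem and pin down the only topological invariant not determined by $-K_X=rH$, namely $H\cdot c_2(X)$, using Kodaira vanishing. First I would expand the Todd class of $X$ using $c_1(X)=rH$:
\[
\mathrm{td}(X)=1+\tfrac{r}{2}H+\tfrac{1}{12}\bigl(r^2H^2+c_2(X)\bigr)+\tfrac{r}{24}\,H\cdot c_2(X),
\]
pair with $\ch(E)$, and extract the degree-$3$ component to obtain
\[
\chi(E)=\ch_3(E)+\tfrac{r}{2}H\ch_2(E)+\tfrac{1}{12}\bigl(r^2H^2+c_2(X)\bigr)\ch_1(E)+\tfrac{r}{24}\bigl(H\cdot c_2(X)\bigr)\ch_0(E).
\]

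Next I would evaluate this formula at $E=\mathcal O_X$. Since $-K_X=rH$ is ample, Kodaira vanishing gives $H^i(\mathcal O_X)=0$ for $i>0$, hence $\chi(\mathcal O_X)=1$. Substituting $\ch(\mathcal O_X)=(1,0,0,0)$ into the expression above forces $\tfrac{r}{24}(H\cdot c_2(X))=1$, i.e.\ $H\cdot c_2(X)=24/r$.

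Finally, because $\mathrm{Pic}(X)=\mathbb Z H$ we may write $\ch_1(E)=nH$ for some $n\in\mathbb Q$, so
\[
c_2(X)\cdot \ch_1(E)=n\,H\cdot c_2(X)=\tfrac{24n}{r}=\tfrac{2}{rd}\,H^2\ch_1(E).
\]
Substituting this identity and $H\cdot c_2(X)=24/r$ back in, the coefficient of $H^2\ch_1(E)$ becomes $\tfrac{r^2}{12}+\tfrac{2}{rd}$ and the constant term becomes $\ch_0(E)=\tfrac{1}{d}H^3\ch_0(E)$. Specializing to the four pairs $(r,d)=(4,1),(3,2),(2,d),(1,d)$ listed in Theorem \ref{thm:pic1fano3fold} reproduces each of the four stated formulas. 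There is no real obstacle: the computation is purely mechanical once $H\cdot c_2(X)$ is fixed, and the only external input is the Fano vanishing $H^i(\mathcal O_X)=0$.
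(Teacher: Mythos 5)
Your proposal is correct and reaches all four formulas, but it takes a genuinely different route from the paper. You expand $\mathrm{td}(X)$ explicitly in terms of $c_1(X)=rH$ and $c_2(X)$, so the single unknown $H\cdot c_2(X)$ controls both the $\ch_1$-coefficient (through $\tfrac{1}{12}c_2$) and the $\ch_0$-coefficient (through $\mathrm{td}_3=\tfrac{1}{24}c_1c_2$); one evaluation $\chi(\mathcal O_X)=1$, supplied by Kodaira vanishing, then determines everything, yielding the uniform formula $\chi(E)=\ch_3(E)+\tfrac{r}{2}H\ch_2(E)+\bigl(\tfrac{r^2}{12}+\tfrac{2}{rd}\bigr)H^2\ch_1(E)+\tfrac{1}{d}H^3\ch_0(E)$ for all four cases at once. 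The paper instead leaves the coefficients $a_2,a_3$ of $H^2\ch_1$ and $H^3\ch_0$ as independent unknowns and needs two evaluation points: $\chi(\mathcal O)=1$ fixes $a_3$, and $\chi(\mathcal O(-H))$ fixes $a_2$ --- computed by Kodaira vanishing when $r\geq 2$ but requiring Serre duality ($\chi(\mathcal O(-H))=-\chi(\mathcal O)=-1$) when $r=1$. Your approach buys uniformity (no case split at $r=1$) and makes the geometric input transparent, namely $c_1(X)c_2(X)=24\,\chi(\mathcal O_X)=24$; the paper's approach avoids writing the Todd class explicitly. One bookkeeping slip to fix: in your displayed identity the last equality should read $c_2(X)\cdot\ch_1(E)=\tfrac{24}{rd}H^2\ch_1(E)$, not $\tfrac{2}{rd}H^2\ch_1(E)$; the factor $\tfrac{1}{12}$ from the Todd class is what produces the $\tfrac{2}{rd}$ in the final coefficient, which you do state correctly, so the conclusion is unaffected.
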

\begin{proof}
By the Hirzebruch-Riemann-Roch theorem, $\chi(E)$ can be written as
$\ch_3(E)+ \frac{1}{2}\ch_1(\mathcal O(-K_X))\ch_2(E)+a_2H^2\ch_1(E)+a_3H^3\ch_0(E)$
for some coefficients $a_2$ and $a_3$. By the Kodaira vanishing theorem, $\HH^i(X,\mathcal O)=$ $\HH^i(X,\mathcal O(-K_X)\otimes\mathcal O(K_X))=0$, when $i\geq 1$. Therefore,

\[\chi(\mathcal O) = \mathrm{hom}(\mathcal O,\mathcal O) = 1.\]
This implies $1=a_3H^3\ch_0(\mathcal O)$ and $a_3=\frac{1}{d}$. 

In order to compute $a_2$ when the index is greater than $1$, by the Kodaira vanishing theorem again, we have

\[\chi(\mathcal O(-H))=\chi\left(\mathcal O(K_X)\otimes \mathcal O\big((r-1)H\big)\right) = \hom (\mathcal O,\mathcal O(-H)) =
0.\]

Substitute this into the HRR formula, for instance, when $r=2$:

\[0 = -\frac{H^3}{6}+H\cdot\frac{H^2}{2} -a_2H^2\cdot (-H)+\frac{1}{d}H^3,\]
we get $a_2$ $=$ $\frac{1}{3}+\frac{1}{d}$. The \textbf P$^3$ and
quadric hypersurface cases can be solved in the same way.

When $r=1$, by Serre duality, $\chi(\mathcal O(-H))$ $=$
$-\chi(\mathcal O)$ $=$ $-1$. Substitute this value back to the HRR
formula, we get $a_2$   $=\frac{1}{12}+\frac{2}{d}$.
\end{proof}

\subsection{The space of tilt slope functions}\label{sec1.2}

In this section we recollect some notations of stability conditions
and useful lemmas.

Let $E$ be an object in D$^b(X)$. We write ch$^\beta_i(E)$ for the $\beta H$-twisted $i$-th Chern character ch$_i(E(-\beta H))$. The $H$-slope $\mu_H(E)$ is defined as $\frac{H^2\ch_1(E)}{H^3\ch_0(E)}$. The notations  of tilt heart Coh$_{\beta}(X)$, the reduced central charge $\overline Z_{\alpha,\beta}$, tilt slope function $\nab$, tilt-stability and $\ob$-stability are defined in the introduction. We denote the Grothendieck group of D$^b(X)$ as K$(X)$. The $\mathbb R$-linear span of the Grothendieck group is K$_{\mathbb R}(X):= $ K$(X)\otimes_{\mathbb Z}\mathbb R$.\\

The character map $\overline{v}_H$ maps objects in D$^b(X)$ to their degrees, which is a vector in $\mathbb R^4$:
\[\overline{v}_H(E):=\left(H^3\ch_0(E),H^2\ch_1(E),H\, \ch_2(E),\ch_3(E)\right).\]
The map $\overline {v}_H$ factors through  K$(X)$.
The reduced character map $\widetilde{v}_H$ maps some objects in D$^b(X)$ to the real projective plane P($\mathbb R ^3$):
\[\widetilde{v}_H(E):=\left[H^3\ch_0(E),H^2\ch_1(E),H\, \ch_2(E)\right]=\left(1, \frac{H^2\ch_1(E)}{H^3\ch_0(E)},\frac{H\, \ch_2(E)}{H^3\ch_0(E)}\right).\]
The map $\widetilde{v}_H$ is well-defined only when ($H^3\ch_0(E)$, $H^2\ch_1(E)$, $H\, \ch_2(E)$) is non-zero.
The last equality makes sense when $\widetilde{v}_H(E)$ is not on the line at infinity, in other words, when $\ch_0(E)\neq 0$. We call this image space P($\mathbb R ^3$) the \emph{projective $\cccp$-plane} and  P($\mathbb R ^3)\setminus\{\ch_0=0\}$ the \emph{$\cccp$-plane}.

By $H$-\emph{discriminant}, we denote
\[\overline \Delta_H(E) := (H^2\ch_1(E))^2-2H^3\ch_0(E) H\, \ch_2(E);\]
note that $\db_H$ factors via $\overline v_H$. The \textit{reduced discriminant} is given by
\[\widetilde \Delta_H(E):= \left(\frac{H^2\ch_1(E)}{H^3\ch_0(E)}\right)^2-2\frac{ H\, \ch_2(E)}{H^3\ch_0(E)}.\]
It is well-defined when $H^3\ch_0(E)\neq 0$. It factors via $ \tv_H$ and is defined on the $\cccp$-plane.  We define $\widetilde \Delta_m$ as the parabola $\widetilde \Delta_H = m$ in the $\cccp$-plane, and write  $\dt_{<0}$ or $\db_{< 0}$ for the area $\dt_H <0$.\\

Given a tilt slope function $\nab$, the kernel of its reduced central charge $\overline{Z}_{\alpha,\beta}$ in K$_\mathbb R(X)$ is independent of $\ch_3$. The kernel is explicitly written as
\[\left\{w\in \text{K}_\mathbb R(X) |\; H^2 \ch_1(w)=\beta H^3\ch_0(w);\;\;\; H\, \ch_2(w)=\frac{\alpha^2+\beta^2}{2}H^3\ch_0(w)\right\}.\]
As mentioned in the introduction, we visualize  $\nab$ via the kernel of its reduced central charge in the $\cccp$-plane, or explicitly, the point with coordinate $\left(1,\beta,\frac{\alpha^2+\beta^2}{2}\right)$. Now if $P$ is a point in $\dt_{<0}$ in the $\cccp$-plane, then there is a unique tilt slope function $\nab$ whose kernel is $P$, and we denote it by $\nu_P$. Let $E$ be an object in Coh$_\beta(X)$ such that $\db_H(E)\geq 0$; the kernel of the formal reduced central charge $\overline{Z}_{0,\ob(E)}$ in the $\cccp$-plane is the point on the curve $\dt_0$ such that its tangent line of $\dt_0$ passes through $\widetilde{v}_H(E)$. Though there are two candidate tangent points, the point is uniquely determined as follows: when ch$_0(E)\geq 0$, $\ob(E)$ is  less than $\beta$;  when ch$_0(E)< 0$, $\ob(E)$ is greater than or equal to $\beta$.

When we use the $\cccp$-plane, we always assume that  $\frac{H^2\ch_1}{H^3\ch_0}$ is the horizontal axis and $\frac{H\, \ch_2}{H^3\ch_0}$ is the vertical axis. Let $E$ and $F$ be two objects in $D^b(X)$ with non-zero $\widetilde v_H$. Let $\nab=\nu_P$ be a tilt slope function, we write $L_{EF}$ ($l_{EF}$) for the straight line (segment) in the projective $\cccp$-plane across $\widetilde v_H(E)$ and $\widetilde v_H(F)$; $L_{E\nab}$ or $L_{EP}$ for the line passing through $\widetilde v_H(E)$ and $P$. We write $L_{E\pm}$ for the line passing through $\tv_H(E)$ and $(0,0,1)$. We write $L_{E+}$ for the ray in the $\cccp$-plane from $\widetilde v_H(E)$ to $(0,0,1)$ (upward direction) and $L_{E-}$ for the ray from $E$ to $(0,0,-1)$ (downward direction).

\begin{center}

\tikzset{%
    add/.style args={#1 and #2}{
        to path={%
 ($(\tikztostart)!-#1!(\tikztotarget)$)--($(\tikztotarget)!-#2!(\tikztostart)$)%
  \tikztonodes},add/.default={.2 and .2}}
}

\begin{tikzpicture}[domain=2:1]
\newcommand\XA{0.1}
\newcommand\obe{-0.3}

\coordinate (E) at (1.2,0.5);
\node  at (E) {$\bullet$};
\node [below] at (E) {$E$};

\coordinate (F) at (3,-1);
\node [left] at (F) {$F$};
\node at (F) {$\bullet$};

\coordinate (P) at (-0.5,1.85);
\node [above] at (P) {$\nu_P$};
\node at (P) {$\bullet$};

\coordinate (B) at (\obe,\obe*\obe/2);
\node at (B) {$\bullet$};
\node[below] at (B) {$\overline{\beta}(F)$};

\draw [add =0 and 1,opacity =1] (F) to (B) node[above, opacity =1] {tangent line};

\draw [add =0.5 and 2.5,opacity =1] (F) to (E);




\draw[->] [opacity=\XA] (-4,0) -- (,0) node[above right] {$w$}-- (4,0) node[above right, opacity =1] {$\frac{H^2\ch_1}{H^3\ch_0}$};

\draw[->][opacity=\XA] (0,-2.5)-- (0,0) node [above right] {O} --  (0,5) node[right, opacity=1] {$\frac{H\,\ch_2}{H^3\ch_0}$};

\draw [thick](-3,4.5) parabola bend (0,0) (3,4.5) node [above, opacity =1] {$\widetilde{\Delta}=0$};

\end{tikzpicture} 

Cartoon: $L_{EF}$ and $\ob(F)$.

\end{center}

The following useful lemmas show the convenience for representing $\nab$ by the kernel of its reduced central charge in the $\cccp$-plane.  Their proofs follow immediately from the definitions.

\begin{lemma}
Let $\nu_P$ be a tilt slope function with reduced central charge $\overline{Z}_P$. Let $E$ and $F$ be two objects in $\mathrm{Coh}_P(X)$ such that their Chern characters $v$ and $w$ are not $0$,  then
\begin{center}$\overline{Z}_P(E)$ and  $\overline{Z}_P(F)$ are on the same ray
\end{center}
if and only if $v$, $w$ and $P$ are collinear in the projective $\cccp$-plane.
\label{lemma:paraandspanplane}
\end{lemma}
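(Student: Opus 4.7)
The plan is to unpack both sides of the biconditional into identical linear algebra statements in $\mathbb R^3$. First, since $\overline Z_P$ involves a class $w$ only through $H^3\ch_0^{\beta H}(w)$, $H^2\ch_1^{\beta H}(w)$, and $H\ch_2^{\beta H}(w)$, its value depends only on the triple $(H^3\ch_0(w), H^2\ch_1(w), H\ch_2(w))$. Using the explicit description of $\ker \overline Z_{\alpha,\beta}$ recalled just above the lemma, $\overline Z_P(w)=0$ if and only if this triple is proportional to the representative $\bigl(1,\beta,\tfrac{\alpha^2+\beta^2}{2}\bigr)$ of the point $P$, i.e.\ $\tv_H(w) = P$ in the projective $\cccp$-plane.

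Next, I would reformulate ``on the same ray'' as an $\mathbb R$-linear relation in $\mathbb C$. Since $E, F \in \mathrm{Coh}_P(X)$, both $\overline Z_P(E)$ and $\overline Z_P(F)$ lie in the closed upper half plane $\{\Im z \geq 0\} \subset \mathbb C$. Two nonzero vectors in a closed half plane lie on a common ray through the origin if and only if they are $\mathbb R$-linearly dependent: one direction is immediate, and in the other direction the half plane condition rules out a negative proportionality constant. Thus ``same ray'' is equivalent to the existence of real numbers $a,b$, not both zero, such that $\overline Z_P(aE+bF) = 0$.

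Combining the two observations, being on the same ray becomes the statement that there exist $(a,b)\neq (0,0)$ with
\[
a\bigl(H^3\ch_0(E),H^2\ch_1(E),H\ch_2(E)\bigr)+b\bigl(H^3\ch_0(F),H^2\ch_1(F),H\ch_2(F)\bigr)
\]
a scalar multiple of $\bigl(1,\beta,\tfrac{\alpha^2+\beta^2}{2}\bigr)$. This is precisely the condition that the three vectors in $\mathbb R^3$ are linearly dependent, equivalently that the projective points $\tv_H(E)$, $\tv_H(F)$, and $P$ are collinear in the $\cccp$-plane, and the biconditional follows. There is no substantial obstacle here; the only care needed is with degenerate cases (one central charge vanishing, or the two projective points coinciding), and these are automatically absorbed into the statement about linear dependence of three vectors in $\mathbb R^3$.
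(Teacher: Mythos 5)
Your proof is correct and takes essentially the same route as the paper's one-line argument: both reduce ``on the same ray'' to the vanishing of $\overline{Z}_P$ on a real linear combination of the two classes, which is then read off as collinearity of $\tv_H(E)$, $\tv_H(F)$ and $P$ in the projective $\cccp$-plane. The only slip is in identifying the half plane: for this $\overline{Z}_P$ the heart maps into $\{\Re z\geq 0\}$ (with $\Im z\geq 0$ whenever $\Re z=0$), not $\{\Im z\geq 0\}$, and it is this semi-closed condition --- not closedness alone, which would still admit antipodal vectors on the boundary line --- that rules out a negative proportionality constant.
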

\begin{proof}
$\overline{Z}_P(v)$ and $ \overline{Z}_P(w)$ are on the same ray if and only if $\overline {Z}_P(av-bw)$  $=$ $0$ for some
$a,b\in\mathbb R_{>0}$. This implies that $v$, $w$ and Ker$\overline{Z}_P$ are collinear in the $\cccp$-plane.
\end{proof}

\begin{defn}
Let $l^+_{PE}$ be the ray from $P$ along $L_{PE}$ to the direction that $\frac{\ch_1}{\ch_0}$ tends to positive infinity. See the figure below.
\end{defn}

\begin{lemma}
Let $P$ be a point in $\dt_{< 0}$ in the $\cccp$-plane, $E$ and
$F$ be two objects in $\mathrm{Coh}_{\beta}$. The inequality
\[\nu_P(E)>\nu_P(F)\] holds if and only if the ray $l^+_{PE}$ is
above $l^+_{PF}$. \label{lemma:slopecompare}
\end{lemma}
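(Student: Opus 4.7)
The plan is to compute $\nu_P(E)$ explicitly in the affine coordinates on the $\cccp$-plane and reduce the inequality to a comparison of the slopes of two lines through $P$. Writing the kernel point as $P=(1,\beta,(\alpha^2+\beta^2)/2)=(1,P_x,P_y)$, and (for the moment) assuming $\ch_0(E)\neq 0$ with $\tv_H(E)=(1,x_E,y_E)$, expansion of the twisted Chern characters in the definition of $\overline Z_P$ gives
\[\Re\overline Z_P(E)=3\alpha\, H^3\ch_0(E)\bigl(x_E-P_x\bigr),\]
\[\Im\overline Z_P(E)=\sqrt{3}\,H^3\ch_0(E)\bigl((y_E-P_y)-P_x(x_E-P_x)\bigr).\]
The common factor $H^3\ch_0(E)$ cancels in the ratio, producing the clean identity
\[\nu_P(E)=\frac{\sqrt{3}}{3\alpha}\left(\frac{y_E-P_y}{x_E-P_x}-P_x\right),\]
in which $(y_E-P_y)/(x_E-P_x)$ is precisely the slope of the line $L_{PE}$ joining $P$ to $\tv_H(E)$ in the $\cccp$-plane.

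Since the leading coefficient $\sqrt{3}/(3\alpha)$ is strictly positive and the additive shift $-P_x$ is independent of the object, $\nu_P$ is a strictly increasing affine function of the slope of $L_{PE}$. Two distinct lines through a common point $P$ are ordered by their slopes exactly in the sense that the one of larger slope lies above the other on the half-plane $x>P_x$; since $l^+_{PE}$ is by definition the ray from $P$ emanating into this half-plane along $L_{PE}$, the inequality $\nu_P(E)>\nu_P(F)$ is equivalent to $l^+_{PE}$ lying strictly above $l^+_{PF}$.

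It remains to handle the degenerate configurations. If $\ch_0(E)=0$ but $H^2\ch_1(E)\neq 0$, then $\tv_H(E)=[0,H^2\ch_1(E),H\ch_2(E)]$ is a point at infinity of the projective $\cccp$-plane and $L_{PE}$ is the line through $P$ of slope $H\ch_2(E)/H^2\ch_1(E)$; running the same computation of $\Im\overline Z_P(E)/\Re\overline Z_P(E)$ reproduces the formula above verbatim. If instead $\Re\overline Z_P(E)=0$ then $\nu_P(E)=+\infty$ by convention and $L_{PE}$ is vertical, so the comparison remains consistent. The main obstacle here is not conceptual but bookkeeping: one must track the sign cancellations coming from $\ch_0(E)$ in numerator and denominator, and verify each boundary case (object at infinity, vertical line, comparison across $P$). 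No additional input is needed, which matches the author's remark that the lemma follows immediately from the definitions.
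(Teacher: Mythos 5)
Your computation is correct (I checked the expansion of $\Re\overline Z_P$ and $\Im\overline Z_P$ and the resulting identity $\nu_P(E)=\tfrac{\sqrt3}{3\alpha}\bigl(\mathrm{slope}(L_{PE})-P_x\bigr)$), and it is exactly the "immediate from the definitions" verification the paper has in mind, since the paper states this lemma without proof. The degenerate cases are handled adequately, so nothing further is needed.
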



\begin{center}

\tikzset{%
    add/.style args={#1 and #2}{
        to path={%
 ($(\tikztostart)!-#1!(\tikztotarget)$)--($(\tikztotarget)!-#2!(\tikztostart)$)%
  \tikztonodes},add/.default={.2 and .2}}
}

\begin{tikzpicture}[domain=2:1]
\newcommand\XA{0.1}
\newcommand\obe{-0.3}

\coordinate (E) at (-3,-2);
\node  at (E) {$\bullet$};
\node [below] at (E) {$E$};

\coordinate (F) at (3,-1);
\node [left] at (F) {$F$};
\node at (F) {$\bullet$};

\coordinate (P) at (0.5,1.5);
\node [above] at (P) {$\nu_P$};
\node at (P) {$\bullet$};

\draw [dashed] (P) -- (F);
\node at (2,0.5) {$l^+_{PF}$};

\draw  (P) -- (E);
\draw [add =0 and 1,dashed] (E) to (P) node [right]{$l^+_{PE}$};

\draw[->][opacity=0.5] (0.5,-2)-- (P)  --  (0.5,4) node [right, opacity =0.5] {$\Im \overline Z_P$};
\draw[->][opacity=0.5] (P)  --  (4,1.5) node [right, opacity =0.5] {$\Re \overline Z_P$};




\draw[->] [opacity=\XA] (-4,0) -- (,0) node[above right] {$w$}-- (4,0) node[above right, opacity =1] {$\frac{H^2\ch_1}{H^3\ch_0}$};

\draw[->][opacity=\XA] (0,-2.5)-- (0,0) node [above right] {O} --  (0,5) node[right, opacity=1] {$\frac{H\,\ch_2}{H^3\ch_0}$};

\draw [thick](-3,4.5) parabola bend (0,0) (3,4.5) node [above, opacity =1] {$\widetilde{\Delta}=0$};

\end{tikzpicture} 

Figure: compare slopes on the $\cccp$-plane

\end{center}


\begin{lemma}
Let $\nu$ be a tilt slope function and $F$ be a
$\nu$ tilt-stable object, then for any tilt slope function
$\tau$ on the line/wall $L_{F\nu}\bigcap \dt_{<0}$, $F$ is also
$\tau$ tilt-stable.
\label{lemma:stabonthewholewall}
\end{lemma}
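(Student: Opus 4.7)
The plan is a short wall-crossing argument phrased in the $\cccp$-plane. Let $P$ be the kernel of $\overline Z_\nu$, so $\nu=\nu_P$, and let $Q\in L_{F\nu}\cap \db_{<0}$ be any other kernel-point with $\tau=\nu_Q$; write $\beta(P),\beta(Q)$ for the corresponding $\chy$-coordinates. I will argue by contradiction: suppose $F$ is not $\tau$-tilt-stable, so there is a nontrivial exact sequence $0\to G\to F\to K\to 0$ in $\mathrm{Coh}_{\beta(Q)}(X)$ with $\tau(G)\geq \tau(F)$.

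The geometric heart of the argument is the observation that, by Lemma~\ref{lemma:paraandspanplane}, the locus of kernel-points $P'$ at which $\nu_{P'}(G)=\nu_{P'}(F)$ is exactly the line $L_{FG}$ in the projective $\cccp$-plane. Both $L_{FG}$ and $L_{F\nu}$ pass through $\tv_H(F)$. By the Bogomolov inequality applied to the $\tau$-semistable $F$, the point $\tv_H(F)$ lies on or below the parabola $\dt_H=0$, while the open arc $L_{F\nu}\cap \db_{<0}$ lies strictly above it. Consequently either $L_{FG}=L_{F\nu}$, or $L_{FG}$ does not meet $L_{F\nu}\cap \db_{<0}$ at all. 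In either case $\nu_{P'}(G)-\nu_{P'}(F)$ has constant sign on the connected arc $L_{F\nu}\cap \db_{<0}$, and since that sign is nonnegative at $Q$, it is nonnegative at $P$; hence $\nu(G)\geq \nu(F)$.

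To upgrade this numerical inequality into a genuine failure of $\nu$-tilt-stability one needs the sequence $0\to G\to F\to K\to 0$ to be honest in $\mathrm{Coh}_{\beta(P)}(X)$. I would transport it along the segment from $Q$ to $P$ inside $L_{F\nu}\cap \db_{<0}$: the Bogomolov inequality for tilt-semistable objects of \cite{BMT14} bounds the discriminants, and hence the possible numerical classes, of tilt-semistable factors of $G$ and $K$ along this compact segment, so the memberships $G,K\in \mathrm{Coh}_{\beta(P')}$ can fail only at finitely many intermediate ``walls''. At each such wall one replaces $G$ by its $\nu_{P'}$-Harder--Narasimhan factors—which, by the same collinearity argument applied to each factor, still lie on $L_{F\nu}$—and continues. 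Reaching $P$ one obtains the desired destabilizing subobject in $\mathrm{Coh}_{\beta(P)}(X)$, contradicting $\nu$-tilt-stability of $F$.

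The slope geometry is essentially immediate from Lemmas~\ref{lemma:paraandspanplane} and~\ref{lemma:slopecompare}; the main obstacle is the HN-propagation of the previous paragraph, which is standard but delicate bookkeeping as in \cite{BMS}, and is what distinguishes this lemma from the two preceding it in the section.
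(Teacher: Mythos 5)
The paper never actually proves this lemma: it is grouped with Lemmas~\ref{lemma:paraandspanplane} and~\ref{lemma:slopecompare} under the blanket remark that the proofs ``follow immediately from the definitions,'' and the honest reference is the wall-and-chamber structure for tilt stability developed in Appendix~B of \cite{BMS}. So there is no argument in the text to match yours against. Your geometric core is correct and is indeed the reason the lemma holds: a destabilizer $G$ determines the line $L_{FG}$ through $\tv_H(F)$ as the locus where the two slopes agree; since $\tv_H(F)$ lies outside $\dt_{<0}$ by the Bogomolov inequality (which you should invoke at $\nu$, where stability of $F$ is hypothesized, rather than at $\tau$), the line $L_{FG}$ either coincides with $L_{F\nu}$ or misses the arc $L_{F\nu}\cap\dt_{<0}$ entirely, so the sign of $\nu_{P'}(G)-\nu_{P'}(F)$ cannot change along the arc. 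This is the right observation and the only genuinely geometric input.

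The gap is in the transport step. First, the claim that the $\nu_{P'}$-Harder--Narasimhan factors of $G$ ``still lie on $L_{F\nu}$'' is false in general: if $\nu_Q(G)>\nu_Q(F)$ strictly then $\tv_H(G)$ itself is not on $L_{F\nu}$, and even when $\nu_Q(G)=\nu_Q(F)$ the HN factors of $G$ at an intermediate point $P'$ are governed by walls for $G$ through $P'$, not by $L_{F\nu}$. Second, the replacement procedure is circular: to regard the maximal destabilizing subobject of $G$ as a subobject of $F$ in $\mathrm{Coh}_{\beta(P')}(X)$ you must already know that $G\hookrightarrow F$ survives in that heart, which is exactly the membership question being deferred. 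Third, heart membership is not a ``finitely many walls'' phenomenon: once $\beta'$ crosses the $\mu_H$-slope of an HN factor of $\HH^0(G)$, the object $G$ leaves the heart for an entire interval and does not come back. The standard repair runs the argument in the opposite direction: show that the set of points of the arc at which $F$ (or $F[1]$) lies in the heart and is tilt-stable is open and closed in the connected arc. Closedness is where your collinearity observation does the work: at a boundary point $F$ would be strictly semistable, the destabilizer then forces $L_{FG}=L_{F\nu}$, and its Jordan--H\"older factors --- being semistable of the same finite slope at that single point --- belong to $\mathrm{Coh}_{\beta'}(X)$ for all nearby $\beta'$ by the explicit membership lemma of \cite{BMS}, contradicting stability of $F$ at nearby points of the arc. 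This localizes the heart-membership issue to semistable objects at a single point, where it can actually be controlled.
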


The main theorem of this paper reads as follows.

\begin{theorem}
Conjecture 5.3 in \cite{BMS} holds for smooth Fano $3$-fold of
Picard number one: Let $X$ be a smooth Fano $3$-fold of Picard number one, and $E\in D^b(X)$ be a $\ob$-stable object, then
\[\czs\leq 0.\]
\label{thm:mainthmfanostab}
\end{theorem}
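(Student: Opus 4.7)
The plan is to translate the desired inequality $\czs\le 0$ into a family of Euler-characteristic inequalities $\chi(\mathcal O(kH),E)\le 0$ for $k\in\{1,\dots,r\}$, and then unwind them using the HRR formulae of Lemma \ref{lemma:hrr}. Since $\ob$-stability is preserved under $\otimes\mathcal O(H)$ and $[1]$ (with a compensating shift of $\ob$ by an integer, and a dual argument in case $E[1]$ rather than $E$ is in the heart), I first normalize to $0<\ob(E)<1$, and throughout work with $(\alpha,\beta)$ arbitrarily close to $(0,\ob(E))$ on the locus where $E$ is $\nab$ tilt-stable.

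The key vanishings come from the $\cccp$-plane picture of Section \ref{sec1.2}. The line bundles $\mathcal O(mH)$ satisfy $\tv_H(\mathcal O(mH))=(1,m,m^2/2)$, so their reduced characters trace out the parabola $\dt=0$, while $\tv_H(E)$ lies inside the cusp region cut out by the two tangent lines to that parabola at $(1,\ob(E),\ob(E)^2/2)$. By Lemma \ref{lemma:slopecompare}, the ray picture gives $\nab(\mathcal O(mH))>\nab(E)$ for every integer $m\ge 1$ and $\nab(\mathcal O(mH)[1])<\nab(E)$ for every $m\le 0$; tilt-stability of $E$ and of the line bundles (which saturate the discriminant bound) then forces $\Hom(\mathcal O(mH),E)=0$ for $m\ge 1$ and $\Hom(E,\mathcal O(mH)[1])=0$ for $m\le 0$. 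Using $\omega_X=\mathcal O(-rH)$ and Serre duality, for each $k\in\{1,\dots,r\}$ this kills $\mathrm{hom}^0$, $\mathrm{hom}^2$ and $\mathrm{hom}^3$ in $\chi(\mathcal O(kH),E)$, so $\chi(\mathcal O(kH),E)\le 0$. Expanding via Lemma \ref{lemma:hrr} and rewriting in $\ob(E)H$-twisted characters produces, for each $k$, an inequality of the shape
\[\czs\;\le\; A_k(\ob(E))\,H^3\czl+B_k(\ob(E))\,H^2\czy+C_k(\ob(E))\,H\cze\]
with explicit polynomial coefficients.

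For the cases $r\in\{4,3,2\}$, the Todd-class data in Lemma \ref{lemma:hrr} is large enough that a single $k$ (or a convex combination of the $k$-inequalities), together with the BG bound $\dbhz(E)\ge 0$ and the range $0<\ob(E)<1$, forces the right-hand side to be non-positive; this settles the higher-index cases (and recovers the results of \cite{MacP3,Sch} for \textbf P$^3$ and the quadric). The hard part will be the index-one case, where only $k=1$ is available and the coefficient $\tfrac{1}{12}+\tfrac{2}{d}$ of $H^2\czy$ is too small to be absorbed by BG alone. My plan there is to first prove Theorem \ref{thm:bndforstabshfinintro}: by a wall-crossing analysis in the $\cccp$-plane, I will rule out any $\mu_H$-slope-stable sheaf from having $\tv_H$ inside the sharper forbidden region $R_{3/(2rd)}$, showing that such a sheaf would admit a tilt-destabilizing short exact sequence whose reduced characters contradict either ordinary BG or the already-established conclusion on its factors. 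This refined control on $(H^2\czy,H^3\czl)$ fed back into $\chi(\mathcal O(H),E)\le 0$ is what supplies the extra slack needed to conclude $\czs\le 0$. The main technical obstacle is the simultaneous calibration of the region $R_{3/(2rd)}$, so that the wall-crossing ruling it out is feasible while the resulting Euler-characteristic inequality still closes tightly.
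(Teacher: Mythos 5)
Your overall architecture --- Hom-vanishings from the slope comparison in the $\cccp$-plane, Serre duality with $\omega_X=\mathcal O(-rH)$ to kill $\mathrm{ext}^2$ and $\mathrm{ext}^3$, HRR expansion of $\chi(\mathcal O(kH),E)$ in twisted Chern characters, and a sharpened discriminant bound $R_{3/(2rd)}$ to rescue the index-one case --- matches the paper. But there are two genuine gaps. First, you cannot normalize to $0<\ob(E)<1$: tensoring by $\mathcal O(mH)$ shifts $\ob$ by integers, so if $\ob(E)$ is an integer it stays an integer, and at $\ob(E)=0$ the point $\tv_H(\mathcal O)$ is exactly the tangency point of the wall through $\tv_H(E)$, so the slope comparison gives neither $\Hom(\mathcal O,E)=0$ nor $\Hom(E,\mathcal O[1])=0$. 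The paper must treat this case separately: assuming $\ch_3(E)>0$ forces both Hom groups to be nonzero, and one then builds a non-split extension $F$ of $E$ by $\mathcal O$, proves $F$ is again $\ob$-stable (Lemma \ref{lemma:Fisobstable}, which is delicate), and iterates to produce $\ob$-stable objects with character $(\ch_0(E)+n,\ch_1(E),0,\ch_3(E))$ landing in $R_{\frac{3}{2d}}$ for $n\gg0$ --- contradiction. Your proposal has no mechanism for this case.

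Second, your plan for Theorem \ref{thm:bndforstabshfinintro} --- ruling out $R_{3/(2rd)}$ by a wall-crossing analysis whose destabilizing factors ``contradict ordinary BG or the already-established conclusion'' --- does not produce the constant $3/(2rd)$. Wall-crossing plus reduction to a minimal-discriminant object only gets you back to the classical bound $\db_H\geq 0$; it cannot beat it. The paper's Proposition \ref{lemma:bgcone} uses wall-crossing only to show the minimal-discriminant offender $E$ (or a shift) is tilt-stable for all $\alpha>0$, and the actual numerical improvement comes from a self-pairing Euler characteristic argument: tilt-stability on a wall below $l_{EE(-rH)}$ gives $\Hom(E,E(-rH)[1])=0$, hence $\chi(E,E)\leq\hom(E,E)+\ext^2(E,E)=1$, while HRR gives $\chi(E,E)=-\frac{r}{2d}\db_H(E)+\frac{1}{d}(H^3\ch_0(E))\ch_0(E)>0$ inside the region; equating $\chi(E,E)=1$ forces $\dt_H(E)=\bigl(1-\tfrac{1}{\mathrm{rk}(E)^2}\bigr)\tfrac{2}{dr}\geq\tfrac{3}{2dr}$. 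Without this $\chi(E,E)$ computation your forbidden region has no reason to be as large as $R_{3/(2rd)}$, and the subsequent estimate $\bigl(f(\ob)/f'(\ob)\bigr)^2<\tfrac{3}{2d}$ that closes the index-one case would have nothing to contradict. You should also note that the index-two case in the paper is resolved not by a convex combination but by a dichotomy on the sign of $H^3\czl$, choosing $k=1$ or $k=2$ accordingly; your phrasing is compatible with this but would need to be made precise.
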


\section{Index $2$ case}\label{sec2}
In this section, we prove Theorem
\ref{thm:mainthmfanostab} when the $3$-fold has index $2$.

\begin{proof}
Let $E$ be a $\ob$-stable object as in the
Conjecture 5.3 in \cite{BMS}, we write $\ob(E)$ as $\ob$ for short. By Lemma 5.6 in \cite{BMS}, we may assume that $\tv_H(E)$ is not on $\dt_0$. Replacing $E$ by $E(mH)$ for suitable integer $m$, we may assume $0\leq \ob<
1$.  By Proposition 3.10 in \cite{BMS}, the coherent sheaf $\mathcal O(H)$ is also $\nu_{\alpha,\ob}$ tilt-stable for any $\alpha>0$. Since the line
segment $l_{\nu_{0,\ob}\mathcal O(H)}$ is above the
ray $l^+_{\nu_{0,\ob}E}$ which is on the tangent line, by Lemma \ref{lemma:slopecompare}, the tilt slope
$\nu_{\epsilon,\overline\beta}(\mathcal O(H))>\nu_{\epsilon,\overline\beta}(E)$  for any positive $\epsilon$ small enough. We therefore have
\[\Hom(\mathcal O(H),E)=0.\]
The object  $\mathcal O(-H)[1]$ is also
$\nu_{\epsilon,\ob}$ tilt-stable by Proposition 3.10 in \cite{BMS}. By Lemma \ref{lemma:slopecompare},
$\nu_{\epsilon,\ob}(\mathcal O(-H)[1])$ $<$
$\nu_{\epsilon,\ob}(E)$. It follows that

\[\Hom(E,\mathcal O(-H)[1])=0.\]

When $n$ is an even number, Hom($\mathcal O(H), E[n])$ can be
non-zero only when $n$ is $0$ or $2$  because $X$ has dimension $3$, and only H$^0(E)$ and H$^{-1}(E)$ might be non-zero. By Serre duality, we have
\begin{align*}
\chi(\mathcal O(H),E)& \leq \hom(\mathcal O(H),E) +\mathrm{ext}^2(O(H),E)\\
& = \hom(\mathcal O(H),E) +\hom(E,\mathcal O(-H)[1])= 0. \end{align*}
 By the HRR formula in
Lemma \ref{lemma:hrr}, we have
\begin{align*}
0 &\geq \chi(\mathcal O(H),E) = \ch_3(E)+\left(-\frac{1}{6}+\frac{1}{d}\right)\ch_1\\
& =\czs+ \ob H\cze+
\left(\frac{\overline\beta^2}{2}+\frac{1}{d}-\frac{1}{6}\right)H^2\czy
+\left(\frac{\overline\beta^3}{6}+\overline\beta(\frac{1}{d}-\frac{1}{6})\right)H^3\czl\\
& = \czs+
\left(\frac{\overline\beta^2}{2}+\frac{1}{d}-\frac{1}{6}\right)H^2\czy
+\left(\frac{\overline\beta^3}{6}+\overline\beta(\frac{1}{d}-\frac{1}{6})\right)H^3\czl.
\end{align*}
By the definition of $\ob(E)$, we have $\cze=0$ and get the last equality. By the classification result Theorem \ref{thm:pic1fano3fold}, the
degree $d\leq 5$. The coefficients of
$H^2\czy$ and $H^3\czl$ are
greater than $0$. By the definition of $\overline \beta(E)$, when $E$ is not on $\dt_0$, $E$ is in Coh$_{\ob} (X)$, this implies $H^2\czy\geq 0$. When $H^3
\czl$ $\geq$ $0$ or $\overline \beta=0$, we have
\[\left(\frac{\overline\beta^2}{2}+\frac{1}{d}-\frac{1}{6}\right)H^2\czy
+\left(\frac{\overline\beta^3}{6}+\overline\beta(\frac{1}{d}-\frac{1}{6})\right)H^3\czl\geq 0.\]
Hence, we have $\czs\leq 0$ in these two cases.\\

When $0<\overline\beta <1$, due to a similar argument, we have
\[\text{Hom}(\mathcal O(2H),E) = \text{Hom}(E,\mathcal O[1]) = 0.\]
Again by Lemma \ref{lemma:hrr} and $\cze=0$, we have
\begin{align*}
0 &\geq \chi(\mathcal O(2H),E)\\
& = \czs+ (\ob-1)H\cze \\
& + \left(\frac{\overline\beta^2}{2}-\overline \beta
+\frac{1}{d}+\frac{1}{3}\right)H^2\czy+\left(\frac{\overline\beta^3}{6}-\frac{\overline\beta^2}{2}+(\frac{1}{d}+\frac{1}{3})\overline
\beta -\frac{1}{d}\right)H^3\czl.
\end{align*}
As $d$ $\leq$ $5$, the coefficient of $H^2\czy$
is greater than $\frac{\overline\beta^2}{2}-\overline \beta
+\frac{1}{2}\geq 0$. The derivative of
$\frac{\overline\beta^3}{6}-\frac{\overline\beta^2}{2}+\left(\frac{1}{d}+\frac{1}{3}\right)\overline
\beta -\frac{1}{d}$ is $\frac{\overline\beta^2}{2}-\overline \beta
+\frac{1}{d}+\frac{1}{3}$, and is nonnegative. When $\ob=1$, the function
$\frac{\overline\beta^3}{6}-\frac{\overline\beta^2}{2}+\left(\frac{1}{d}+\frac{1}{3}\right)\overline
\beta -\frac{1}{d}=0$. Hence $\frac{\overline\beta^3}{6}-\frac{\overline\beta^2}{2}+\left(\frac{1}{d}+\frac{1}{3}\right)\overline
\beta -\frac{1}{d}<0$,  when $\overline
\beta <1$. When $\czl$ $<$ $0$, we have
\[\left(\frac{\overline\beta^2}{2}-\overline \beta
+\frac{1}{d}+\frac{1}{3}\right)H^2\czy+\left(\frac{\overline\beta^3}{6}-\frac{\overline\beta^2}{2}+(\frac{1}{d}+\frac{1}{3})\overline
\beta -\frac{1}{d}\right)H^3\czl\geq 0.\]
Therefore, $\czs\leq 0$ in all cases.
\end{proof}
\begin{rem}
The case of $\pp$ and quadric $3$-fold in $\mathbf P^4$ has
already been set up in \cite{BMT14,MacP3,Sch} .
The same argument  we used for the index $2$ case also works, the
computation is simpler since there is no term with $d$. Details
are left to the reader.
\end{rem}

\section{Index $1$ case}\label{sec3}

The index $1$ case is more complicated. Because we do not have
the inequality $\chi(\mathcal O(2H),E)\leq 0$, the same argument as that in
the index $2$ case only gives partial results. To solve this problem, we need a stronger bound for $\left(\chy, \che\right)$ of a tilt-stable object.

\begin{defn}
Let $m\geq 0$ be a real number. The open region $R_m$  is defined on the $\cccp$-plane as the set of points above   the curve $\widetilde \Delta_m$, and above tangent lines of the curve
$\dt_0$ at $\tv_H(\mathcal O(k))$   for all $k\in \mathbb Z$.
\label{def:rm}
\end{defn}


\begin{center}

\tikzset{%
    add/.style args={#1 and #2}{
        to path={%
 ($(\tikztostart)!-#1!(\tikztotarget)$)--($(\tikztotarget)!-#2!(\tikztostart)$)%
  \tikztonodes},add/.default={.2 and .2}}
}

\begin{tikzpicture}[domain=2:1]
\newcommand\XA{0.1}
\newcommand\obe{-0.3}
\newcommand \np{0.1}
\newcommand \nb{0.4472135955}

\tikzset{%
    tgt/.style args={#1 and #2}{
        to path={%
 ($(#1+\nb,#1*#1/2+\nb*\nb/2+#1*\nb-\np)$)--($(#1-\nb,#1*#1/2+\nb*\nb/2-#1*\nb-\np)$)%
  \tikztonodes},tgt/.default={.2}}
}

\tikzset{%
    tgtr/.style args={#1 and #2}{
        to path={%
 ($(#1,#1*#1/2)$)--($(#1-\nb,#1*#1/2+\nb*\nb/2-#1*\nb-\np)$)%
  \tikztonodes},tgtr/.default={.2}}
}

\tikzset{%
    tgtl/.style args={#1 and #2}{
        to path={%
 ($(#1+\nb,#1*#1/2+\nb*\nb/2+#1*\nb-\np)$)--($(#1,#1*#1/2)$)%
  \tikztonodes},tgtl/.default={.2}}
}

\draw [tgt= -2 and 1] (0,0) to (0,0);
\draw [tgt= -1 and 1] (0,0) to (0,0);
\draw [tgt= 0 and 1] (0,0) to (0,0);
\draw [tgt= 1 and 1] (0,0) to (0,0);
\draw [tgt= 2 and 1] (0,0) to (0,0);

\draw [tgtl= -3 and 1] (0,0) to (0,0);
\draw [tgtr= 3 and 1] (0,0) to (0,0);




\draw[->] [opacity=\XA] (-4,0) -- (,0) node[above right] {$w$}-- (4,0) node[above right, opacity =1] {$\frac{H^2\ch_1}{H^3\ch_0}$};

\draw[->][opacity=\XA] (0,-1)-- (0,0) node [above right] {O} --  (0,5) node[right, opacity=1] {$\frac{H\,\ch_2}{H^3\ch_0}$};

\draw [opacity =0.5](-3,4.5) parabola bend (0,0) (3,4.5) node [above, opacity =1] {$\widetilde{\Delta}=0$};
\draw [opacity =0.5](-3,4.5-\np) parabola bend (0,0-\np) (3,4.5-\np) node [right, opacity =0.8] {$\widetilde{\Delta}=\np$};

\draw (1,0.5) node[below right] {$\mathcal O(H)$};

\draw (-1,0.5) node[below left] {$\mathcal O(-H)$};

\draw (0,0) node[below right] {$\mathcal O$};

\draw (2,2) node[below right] {$\mathcal O(2H)$};
\draw (-3,4.5) node[below left] {$\mathcal O(-3H)$};

\draw (-2,2) node[below left] {$\mathcal O(-2H)$};

\foreach \k in {-2,-1,-0,1,2,3}
\pgfmathsetmacro \y {\k-1+\nb}
\pgfmathsetmacro\z{\k-0.5}
\pgfmathsetmacro \w{\k-\nb}
\draw[color=blue] plot [domain=\y:\w] (\x,{\x*\x/2-\np});

\end{tikzpicture} 

Figure: $R_{\frac{1}{10}}$ is the region above the broken curve between $\dt_{0\sim 0.1}$.
\end{center}


The following proposition provides a bound
for Chern characters $\tv_H(E)$ of a $\nab$ tilt-stable
object $E$ in the Fano $3$-fold case. It is slightly stronger than
the bound given by the Bogomolov inequality  $\db_H\geq 0$ for general varieties. 

Recall that $r$ is the index and $d$ is the degree of the Fano $3$-fold.

\begin{prop}
Let $\nab$ be a tilt slope function and $E$ be a $\nab$ tilt-stable object, then $\widetilde v_H(E)$ in the $\cccp$-plane is not in
$R_{\frac{3}{2rd}}$. If $\widetilde v_H(E)$ is on the boundary of $R_{\frac{3}{2rd}}$, then $\ch_0(E)$ is $1$ or $2$.\label{lemma:bgcone}
\end{prop}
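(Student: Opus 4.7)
The plan is by contradiction combined with wall-crossing. Suppose $E$ is $\nu_{\alpha_0,\beta_0}$-tilt-stable with $\tv_H(E) \in R_{3/(2rd)}$. Using the twist invariance of tilt-stability (Proposition~3.10 of \cite{BMS}), we may replace $E$ by $E(-kH)$ for a suitable $k \in \mathbb Z$ and reduce to the strip $0 < H^2\ch_1(E)/H^3\ch_0(E) < 1$. The hypothesis then unpacks into three strict inequalities: $H\ch_2(E) > 0$, $H\ch_2^{H}(E) > 0$, and $\widetilde\Delta_H(E) < 3/(2rd)$. Geometrically, the first two say that $\tv_H(E)$ lies strictly above the tangent lines to $\widetilde\Delta_0$ at $\tv_H(\mathcal O)$ and at $\tv_H(\mathcal O(H))$, so the line segments $L_{E\mathcal O} \cap \widetilde\Delta_{<0}$ and $L_{E\mathcal O(H)} \cap \widetilde\Delta_{<0}$ in the $\cccp$-plane are non-empty and represent potential walls for $E$ with respect to $\mathcal O$ and $\mathcal O(H)$.

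The central step is a wall-crossing dichotomy. If $E$ remains $\nu$-tilt-stable on one of these walls, then Lemma~\ref{lemma:slopecompare} applied at $(\alpha_0,\beta_0)$ together with the tilt-stability of $\mathcal O(kH)$ for every $k$ gives $\nu(\mathcal O) < \nu(E) < \nu(\mathcal O(H))$, which combined with Serre duality forces $\Hom(\mathcal O(H), E) = 0$ and $\Hom(E, \mathcal O[1]) = 0$ (the precise derived degrees depending on the Fano index $r$). Plugging the resulting $\chi(\mathcal O(H), E) \leq 0$ into the Hirzebruch--Riemann--Roch formula from Lemma~\ref{lemma:hrr} and substituting the three strict inequalities above produces a numerical contradiction whose sharp margin is exactly $3/(2rd)$. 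If instead $E$ is destabilized by a proper subobject $F \hookrightarrow E$ in $\mathrm{Coh}_\beta$ with $\tv_H(F)$ on the corresponding wall line, the additive identity
\[
\widetilde\Delta_H(E) = \lambda\widetilde\Delta_H(F) + (1-\lambda)\widetilde\Delta_H(E/F) - \lambda(1-\lambda)\bigl(x(F) - x(E/F)\bigr)^2,
\]
with $\lambda = H^3\ch_0(F)/H^3\ch_0(E)$ and $x(\cdot) = H^2\ch_1/H^3\ch_0$, combined with the Bogomolov inequality $\widetilde\Delta_H \geq 0$ on each tilt-semistable factor, propagates the geometric constraint to $F$ or $E/F$; we recurse on the rank $\ch_0$, with the rank-one base case forcing the destabilizer to be a twist of $\mathcal O$ (which has $\tv_H$ on $\widetilde\Delta_0$, hence not in $R_{3/(2rd)}$), giving the desired contradiction. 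The boundary statement $\ch_0(E) \in \{1,2\}$ is extracted by tracking equality: saturation in both the HRR-$\chi$ inequality and in Bogomolov pins $F$ or $E/F$ to $\mathcal O(kH)$ for some $k$, which caps the rank.

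The main obstacle is the wall-crossing step, specifically verifying that a destabilizing subobject $F$ remains in $R_{3/(2rd)}$ so that the induction closes, and that the recursion terminates at rank one. A secondary technical point is the Hom-vanishing in the generic case: slope comparison gives vanishing only in $\mathrm{Coh}_\beta$, and upgrading to the specific derived degrees needed for the $\chi$-inequality requires a careful use of Serre duality together with a case analysis depending on $-K_X = rH$.
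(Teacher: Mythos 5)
The central numerical step of your plan fails: the inequality $\chi(\mathcal O(H),E)\leq 0$ cannot produce any constraint on $\widetilde\Delta_H(E)$, because the Hirzebruch--Riemann--Roch expansion of $\chi(\mathcal O(H),E)$ begins with the term $\ch_3(E)$, and none of your three hypotheses ($H\ch_2(E)>0$, $H\ch_2^{H}(E)>0$, $\widetilde\Delta_H(E)<\frac{3}{2rd}$) says anything about $\ch_3(E)$. Since the proposition only constrains $\tv_H(E)$, i.e.\ the triple $(\ch_0,\ch_1,\ch_2)$, the third Chern character is a free parameter, and $\chi(\mathcal O(H),E)\leq 0$ can be satisfied by taking $\ch_3(E)$ very negative; no contradiction with $\widetilde\Delta_H(E)<\frac{3}{2rd}$ can follow. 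The paper avoids this by computing the \emph{self}-Euler characteristic: after reducing to an object $E$ of minimal discriminant among all tilt-stable objects with $\tv_H\in R_{\frac{3}{2rd}}$ (so that $E$, resp.\ a shift of it, remains tilt-stable away from the vertical wall), one finds a tilt slope function for which both $E$ and $E(-rH)$ are stable with $\nu(E(-rH)[1])<\nu(E)$; this gives $\ext^2(E,E)=\hom(E,E(-rH)[1])=0$, hence $\chi(E,E)\leq 1$, while HRR gives $\chi(E,E)=-\frac{r}{2d}\db_H(E)+\frac{1}{d}H^3\ch_0(E)\ch_0(E)$, an expression in which the $\ch_3$ and $\ch_1\ch_2$ contributions cancel identically. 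Comparing the two yields exactly $\widetilde\Delta_H(E)\geq\bigl(1-\frac{1}{\mathrm{rk}(E)^2}\bigr)\frac{2}{dr}\geq\frac{3}{2dr}$, together with the statement that equality on the boundary forces $\mathrm{rk}(E)\in\{1,2\}$. Your approach has no mechanism for producing this cancellation, and the ``sharp margin $3/(2rd)$'' you invoke comes precisely from the factor $1-\frac{1}{4}$ in this rank computation, not from any estimate on $\chi(\mathcal O(H),E)$.

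A secondary problem is your wall-crossing recursion. You propose to induct on the rank of a destabilizer, but the quantity that decreases at a non-vertical wall is the discriminant $\db_H$ (Corollary 3.10 of \cite{BMS}), not the rank, which can grow in absolute value along a Jordan--H\"older filtration; a rank recursion therefore need not terminate. Moreover, a rank-one tilt-stable object need not be a twist of $\mathcal O$, so your base case is not justified as stated. The paper sidesteps both issues by choosing $E$ with minimal $\db_H$ among all counterexamples from the outset: any non-vertical wall is then excluded immediately (a suitable factor $F_i$ would lie in $R_{\frac{3}{2rd}}$ with strictly smaller discriminant), and only the vertical wall requires a separate argument via the shift $E[1]$.
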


The proof contains two steps. We first show that such kind of object $E$ or $E[1]$ must be tilt-stable for every $\nab$ (we actually only show this for certain $F$ with the minimum $\db_H$). Then by comparing the slopes and Serre duality, Ext$^2(E,E)=0$. This implies $\chi(E,E)\leq 1$. On the other hand, the bound of $\dt_{\frac{3}{2rd}}$ implies that $\chi(E,E)>0$. Therefore, $\chi(E,E)=1$, and this leads to contradiction to the HRR formula.

\begin{proof}
Consider all objects $F$ with $\tv_H(F)\in R_{\frac{3}{2rd}}$ and for which there are $\alpha>0$, $\beta$ such that $F$ is $\nab$ tilt-stable. Within this set, let $E$ be such that $\db_H(E)$ is minimal.

\begin{lemma}
The above object $E$ does not become strictly tilt-semistable unless on the vertical wall $L_{E+}$.
\label{lemma:Estabeverywhere}
\end{lemma}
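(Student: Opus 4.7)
The plan is to argue by contradiction using a destabilizing sequence together with the minimality of $\bar\Delta_H(E)$ in the choice of $E$. Suppose there is $(\alpha_0,\beta_0)$ with kernel $P_0$ off the vertical wall $L_{E+}$ at which $E$ is strictly $\nu_{\alpha_0,\beta_0}$-semistable. Pick a destabilizing short exact sequence $0\to A\to E\to B\to 0$ in $\mathrm{Coh}_{\beta_0}(X)$ with $\nu_{\alpha_0,\beta_0}(A)=\nu_{\alpha_0,\beta_0}(E)=\nu_{\alpha_0,\beta_0}(B)$, and refine so that one factor, say $A$, is $\nu_{\alpha_0,\beta_0}$ tilt-stable (e.g.\ a Jordan–H\"older subfactor). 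By Lemma \ref{lemma:paraandspanplane}, $\tv_H(A)$, $\tv_H(E)$, $\tv_H(B)$ are collinear with $P_0$ on a single non-vertical line $L$ of the $\cccp$-plane.

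Next I would combine Bogomolov with the additivity of the Chern character. Since $\ov_H(E)=\ov_H(A)+\ov_H(B)$ and the three reduced characters lie on $L$, a direct algebraic expansion (the standard BMT-type identity for collinear characters) gives
\[
\db_H(A)+\db_H(B)\;\leq\;\db_H(E),\qquad \db_H(A),\db_H(B)\geq 0,
\]
the inequalities on the right being the Bogomolov inequality for tilt-semistable objects. In particular $\db_H(A)\leq \db_H(E)$, with strict inequality unless $\db_H(B)=0$.

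The core geometric step is to show $\tv_H(A)\in R_{3/(2rd)}$. Because $\tv_H(E)$ is an interior point of the open region $R_{3/(2rd)}$, and it lies on the open segment from $\tv_H(A)$ to $\tv_H(B)$ (weighted affinely by $H^3\ch_0$), the line $L$ enters $R_{3/(2rd)}$ through $\tv_H(E)$. I would argue that at least one endpoint of the segment still lies in $R_{3/(2rd)}$: otherwise, on each side of $\tv_H(E)$ the line $L$ exits $R_{3/(2rd)}$, crossing either the parabola $\dt_{3/(2rd)}$ or a tangent line at some $\tv_H(\mathcal O(kH))$. The parabola-crossing option for both sides would force $\db_H(A)+\db_H(B)>\db_H(E)$, contradicting the bound above. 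In the tangent-line case, the corresponding factor has $\db_H=0$ and, by the classification of tilt-stable objects on $\dt_0$, must be a shift of $\mathcal O(kH)$ with reduced character exactly at the tangent point; pushing this through, the line $L$ gets pinned down and one checks that $\tv_H(E)$ is forced on or below one of the tangent-line boundaries of $R_{3/(2rd)}$, contradicting $\tv_H(E)\in R_{3/(2rd)}$.

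Once $\tv_H(A)\in R_{3/(2rd)}$ is secured, $A$ is a tilt-stable object in $R_{3/(2rd)}$ with $\db_H(A)<\db_H(E)$, contradicting minimality of $\db_H(E)$. The edge case $\db_H(A)=\db_H(E)$ forces $\db_H(B)=0$, so $B$ is a shifted line bundle; a direct collinearity check along $L$ again violates $\tv_H(E)\in R_{3/(2rd)}$. The hardest step is the tangent-line alternative in the geometric analysis: pinning down the possibilities for $L$ passing through an integer tangent point of $\dt_0$ and ruling out each configuration by invoking the line-bundle classification on $\dt_0$.
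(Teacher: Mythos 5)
Your overall architecture matches the paper's: contradict the minimality of $\db_H(E)$ by exhibiting a tilt-stable factor of the destabilizing filtration whose discriminant strictly drops (Corollary 3.10 of \cite{BMS}, off the vertical wall) and whose reduced character still lies in $R_{\frac{3}{2rd}}$. But the ``core geometric step'' contains two genuine gaps. First, the assertion that $\tv_H(E)$ lies on the open segment from $\tv_H(A)$ to $\tv_H(B)$, ``weighted affinely by $H^3\ch_0$,'' is unjustified: in $\mathrm{Coh}_{\beta_0}(X)$ the factors can have ranks of opposite signs, and then the affine combination with weights $H^3\ch_0(A)$, $H^3\ch_0(B)$ places $\tv_H(E)$ \emph{outside} the segment. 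The betweenness is in fact true, but for the reason the paper gives: all Jordan--H\"older factors have the same phase under $\overline Z_{P_0}$, hence lie in the same component of $L\cap\db_{\geq 0}$ (the two components sit on opposite sides of $P_0$), i.e.\ in a convex $2$-dimensional cone in $\mathbb R^3$ spanned by the boundary point $q\in\dt_0$ and the direction of $L$; there the coefficients along $q$ are nonnegative, and the vector identity $\ov_H(E)=\sum\ov_H(F_i)$ forces the straddling. Second, your exclusion argument for ``one endpoint lies in $R_{\frac{3}{2rd}}$'' does not work: ruling out the case where $L$ crosses $\dt_{\frac{3}{2rd}}$ on both sides via $\db_H(A)+\db_H(B)\leq\db_H(E)$ conflates the reduced discriminant $\dt_H$ with $\db_H=\dt_H\cdot(H^3\ch_0)^2$ --- a factor of small rank can have large $\dt_H$ yet tiny $\db_H$, so no contradiction arises. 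The tangent-line alternative is then left entirely open (``pushing this through\dots one checks''), and the side claim that $\db_H(B)=0$ forces $B$ to be a shifted line bundle is also false ($B$ could, e.g., be torsion or of higher rank with proportional character).

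The paper's route sidesteps all of this. It takes the full Jordan--H\"older filtration at once (so every factor is already tilt-stable, removing your unresolved issue that the factor landing in $R_{\frac{3}{2rd}}$ might be the merely semistable quotient $B$), and it never needs to decide by exclusion which endpoint is in the region. The straddling places some stable factor $F_i$ on the sub-segment between $\tv_H(E)$ and the endpoint $q$ of the component of $L\cap\db_{\geq 0}$; since $q$ lies on $\dt_0$, which is contained in the closure of $R_{\frac{3}{2rd}}$, and $R_{\frac{3}{2rd}}$ is open and convex with $\tv_H(E)$ in its interior, the whole segment $l_{EF_i}$ lies in $R_{\frac{3}{2rd}}$. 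Together with $\db_H(F_i)<\db_H(E)$ this contradicts minimality. You should replace your exclusion argument with this direct observation about the side of $\tv_H(E)$ facing the parabola.
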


\begin{proof}[Proof of Lemma \ref{lemma:Estabeverywhere}]

Suppose $E$ becomes strictly semistable on a wall
$L_{\nu E}$ other than the vertical wall, then we get the filtration
for $E$:
 \[0=E_0\subset E_1\subset\dots\subset E_m=E\]
  with
$\nu$ tilt-stable factors $F_i=E_i/E_{i-1}$.  In the $\cccp$-plane, the reduced characters $\tv_H(F_i)$ are all on the line $L_{\nu E}$. Consider the $\mathbb R$-linear plane spanned by $\tv_H(F_i)$'s and $\tv_H(E)$ in $\mathbb R^3$ (the `affine' image of $K_{\mathbb R}(X)$ under $\tv_H$), the set $\db_H=0$ consists of two lines and cuts the plane into four
components. All points $\tv_H(F_i)$ are in the same component of $\db_{>0}$  as $\tv_H(E)$ (some can be
on the boundary). As a vector, $\tv_H(E)$ is the sum of all
$\tv_H(F_i)$, the points $\tv_H(F_i)$'s are not on
the same side to the ray of $\tv_H(E)$. There exists an $F_i$ such that the line segment $l_{EF_i}$ is contained in $R_{\frac{3}{2rd}}$. Since
$L_{\nu E}$ is not the vertical wall, by Corollary 3.10 in \cite{BMS},
$\overline \Delta_H(F_i)$ is strictly less than $\overline
\Delta_H(E)$. But this contradicts the minimum property of $\db_H (E)$. We conclude that $E$ is $\nu$ tilt-stable for any
$\nu$ on the same side of $\nab$ to the vertical wall $L_{E+}$.
\end{proof}

Back to the proof of Proposition \ref{lemma:bgcone}, on the  vertical wall $L_{E+}$, we consider the filtration for $E[1]$ (or $E$ when $H^3\ch_0(E)<0$). If $E[1]$ (or $E$) is not stable, then due to argument in the lemma, it has a tilt-stable factor $E'$ such that $\tv_H(E')\in R_{\frac{3}{2rd}}$. By the minimum assumption on $\db_H$ and Corollary 3.10 in \cite{BMS}, $E'$ and $E[1]$ have the same $\db_H$, and $\tv_H(E')=\tv_H(E[1])$. Since the stability is an open condition, $E'$ is $\nu$ tilt-stable for $\nu$ on a neighborhood to the right of $L_{E'+}$, and $E'[-1]$ is $\nu$ tilt-stable for $\nu$ on a neighborhood to the left of $L_{E'+}$. Since $E'$ and $E'[-1]$ also have the minimum discriminant, by Lemma \ref{lemma:Estabeverywhere}, either $E'$ or $E'[-1]$ is $\nab$ tilt-stable for any $\alpha>0$.\\

We may assume that $E=E'[-1]$ and $H^3\ch_0(E)> 0$. Since $\widetilde v_H(E)$ is above the tangent line of the curve
$\dt_0$ at
$\tv_H(\mathcal O)$ and $\tv_H\left(\mathcal O(rH)\right)$  , the line segment $l_{EE(-rH)}$
intersects the cone $\widetilde \Delta_{<0}$, and there is a
tilt slope function $\nab$ with kernel below
$l_{EE(-rH)}$. By Lemma \ref{lemma:slopecompare}, we have $\nu_{\alpha,\beta}(E(-rH)[1])$ $<$
$\nu_{\alpha,\beta}(E)$. Since both of them are $\nab$ tilt-stable, we have
\[\Hom (E, E(-rH)[1]) = 0.\]
By Serre duality, we have
\[\chi(E,E) \leq \hom(E,E) + \ext^2(E,E) = 1+
\hom(E,E(-rH)[1])= 1. \]
 On the other hand, by the HRR formula,
\begin{equation*}
\chi(E,E) = -\frac{r}{2d}\overline
\Delta_H(E)+\frac{1}{d}H^3\ch_0(E)\ch_0(E)>0.
\tag{$\blacksquare$}\label{eq0}
\end{equation*}
 The last inequality is
because of \[\widetilde \Delta_H(E)< \frac{3}{2rd} < \frac{2}{rd}\text{ and  }\db_H(E) = \dt_H(E)\left(H^3\ch_0(E)\right)^2.\]
Hence, $\chi(E,E) = 1$. Substitute this into (\ref{eq0}), we have the identity:
\[\widetilde \Delta_H(E) =
\big(1-\frac{1}{\mathrm{rk}(E)^2}\big)\frac{2}{dr}\geq
\big(1-\frac{1}{4}\big)\frac{2}{dr} = \frac{3}{2dr}.\]

Therefore, $\widetilde v_H(E)$ is not in $\widetilde \Delta_{<\frac{3}{2dr}}$
and is on the boundary of $R_{\frac{3}{2rd}}$ only when rk$(E)$ is $1$ or $2$.

\end{proof}
\begin{cor}
Let $X$ be a Fano $3$-fold of Picard number one, then for any
$\mu_H$-slope semistable coherent sheaf $F$ on $X$, $\widetilde v_H(F)$ is
not inside $R_{\frac{3}{2rd}}$.
\label{cor:noslstabinRd}
\end{cor}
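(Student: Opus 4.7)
The plan is to reduce the statement for a $\mu_H$-semistable sheaf $F$ to the tilt-stability result of Proposition \ref{lemma:bgcone}, by passing to the slope Jordan--H\"older factors of $F$ and averaging their reduced Chern characters along the vertical line of constant slope in the $\cccp$-plane.

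First I would dispose of the torsion case: if $H^3\ch_0(F)=0$ then $\tv_H(F)$ lies on the line at infinity of the projective $\cccp$-plane and is automatically not in the affine region $R_{\frac{3}{2rd}}$. So from now on assume $F$ is torsion-free of positive rank and $\mu_H$-semistable, and let $F_1,\dots,F_k$ be the slope-stable factors of a Jordan--H\"older filtration of $F$ in $\mathrm{Coh}(X)$, so that each $F_j$ is $\mu_H$-stable with $\mu_H(F_j)=\mu_H(F)$.

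Next I would invoke the standard large-volume fact that any torsion-free $\mu_H$-stable sheaf $F_j$ is $\nab$ tilt-stable for $\alpha\gg 0$ and any $\beta<\mu_H(F_j)$; this is used throughout \cite{BMT14} and \cite{BMS}. Proposition \ref{lemma:bgcone} then applies to each $F_j$ and gives that $\tv_H(F_j)$ does not lie in the open region $R_{\frac{3}{2rd}}$.

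Finally the key averaging step: since $\mu_H(F_j)=\mu_H(F)$ for all $j$, all the points $\tv_H(F_j)$ lie on the vertical line $\chy=\mu_H(F)$ in the $\cccp$-plane, and the additivity $\overline v_H(F)=\sum_j\overline v_H(F_j)$ yields
\[
\che(F)=\sum_j\lambda_j\,\che(F_j),\qquad \lambda_j=\frac{H^3\ch_0(F_j)}{H^3\ch_0(F)}>0,\qquad \sum_j\lambda_j=1.
\]
Since $R_{\frac{3}{2rd}}$ is convex (it is the intersection of the open epigraphs of the convex function cutting out $\dt_{\frac{3}{2rd}}$ and the tangent lines) and unbounded above, its intersection with the vertical line $\chy=\mu_H(F)$ is an open upward ray $\{\,y>y_0\,\}$ for some $y_0$. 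By the previous step each $\che(F_j)\leq y_0$, so the convex combination satisfies $\che(F)\leq y_0$ as well, and hence $\tv_H(F)\notin R_{\frac{3}{2rd}}$. The one step that really needs attention is the large-volume tilt-stability of the slope-stable factors $F_j$; the rest is a one-dimensional averaging on a vertical line in the $\cccp$-plane.
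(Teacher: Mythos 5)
Your argument is correct and is essentially the one the paper intends: the paper states this corollary without proof as an immediate consequence of Proposition \ref{lemma:bgcone}, relying on exactly the two ingredients you make explicit, namely that $\mu_H$-stable torsion-free sheaves are $\nu_{\alpha,\beta}$ tilt-stable for $\alpha\gg 0$, and that the semistable case reduces to the stable one. Your Jordan--H\"older plus convexity step (all factors sit on the vertical line $\frac{H^2\ch_1}{H^3\ch_0}=\mu_H(F)$, whose intersection with the convex region $R_{\frac{3}{2rd}}$ is an open upward ray, so the convex combination cannot enter it) is a clean and complete way to fill in the detail the paper leaves implicit.
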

\begin{rem}
This kind of bound is trivial to prove in the Fano surface case, since in the surface case the $\mathrm{Ext}^2$ vanishing is directly due to Serre duality. In the Fano $3$-fold case, we need the concept of tilt stability condition.
\end{rem}
Now we are ready to prove Theorem \ref{thm:mainthmfanostab} for the index $1$ case.
\begin{proof}[Proof of the index one case]
Let $E$ be a $\ob$-stable object as in the
Theorem \ref{thm:pic1fano3fold}. We first consider the case that $\ob$ is not an integer. For the same reason as that in the index two case, we may assume that $0< \ob(E)<
1$. By Lemma 5.6 in \cite{BMS}, we may also assume that $\tv_H(E)$ is not on the cone
$\dt_0$. As $0<\ob<1$, by the same argument as that in the
index two case,  we still have
\[\Hom(E,\mathcal O[1])=0\text{ and }\Hom(\mathcal O(H),E)=0.\]
By the same argument, we have
\begin{align*}
0  \geq &  \chi(\mathcal O(H),E) \\
 = & \czs  + \left(\ob-\frac{1}{2}\right)\cze +
\left(\frac{\overline\beta^2}{2}-\frac{\overline \beta}{2}
+\frac{2}{d}+\frac{1}{12}\right)H^2\czy\\
   & +\left(\frac{\overline\beta^3}{6}-\frac{\overline\beta^2}{4}+(\frac{2}{d}+\frac{1}{12})\overline
\beta -\frac{1}{d}\right)H^3\czl \\
= & \czs +
\left(\frac{\overline\beta^2}{2}-\frac{\overline \beta}{2}
+\frac{2}{d}+\frac{1}{12}\right)H^2\czy +\left(\frac{\overline\beta^3}{6}-\frac{\overline\beta^2}{4}+(\frac{2}{d}+\frac{1}{12})\overline
\beta -\frac{1}{d}\right)H^3\czl.
\end{align*}
Let the function $f(x)$ be
\[\frac{x^3}{6}-\frac{x^2}{4}+\left(\frac{2}{d}+\frac{1}{12}\right)x -\frac{1}{d},\]
then the coefficient $\frac{\overline\beta^2}{2}-\frac{\overline
\beta}{2} +\frac{2}{d}+\frac{1}{12}$ for
$\czy$ is $f'(\overline \beta)$. As $d\leq
22$, $f'(\overline \beta)$ is greater than $0$ for any real number
$\overline \beta$. The coefficient for $H^3\czs$
is $f(\overline \beta)$, which is monotonically increasing with
respect to $\overline \beta$. As $f(\frac{1}{2})=0$, in either one
of the two subcases
\begin{itemize}
\item $\frac{1}{2}\leq \overline \beta <1$ and $\czl\geq 0$;
\item or $0 <\overline \beta \leq \frac{1}{2}$ and $\czl\leq 0$;
\end{itemize}
we have $\czs\leq 0$.

We next show by contradiction that in the other subcases,
$f'(\ob)H^2\czy+f(\ob)H^3\czl$
$\geq$ $0$. In the case when  $0 < \overline \beta < \frac{1}{2}$
and $\ch^{\overline \beta}_0(E) > 0$, assume that
\begin{equation*}f'(\overline\beta)H^2\czy+f(\overline\beta)H^3\czl<0.
\tag{*}\label{eq:rest<0}\end{equation*} Since $\overline \beta(E)$
is greater than $0$, $\widetilde v_H(E)$ is not below the tangent
line of the curve
$\widetilde \Delta_0$ at $\widetilde v_H(\mathcal O)$. By the assumption (\ref{eq:rest<0}) on $E$,
we have
\[f'(\ob)H^2\ch_1(E)+(-\frac{\overline \beta^3}{3}+\frac{\overline \beta^2}{4}-\frac{1}{d})H^3\ch_0(E)<0.\]
The tangent lines  of $\dt_0$ at $\tilde v_H(\mathcal O)$ and $\tilde v_H(\mathcal O(H))$ in the $\cccp$-plane intersect at $(1,\frac{1}{2},0)$. Substitute $\left( H^3\ch_0(E),H^2\ch_1(E)\right)$ by $\left(1,\frac{1}{2}\right)$, we have
\begin{align*}
\frac{1}{2}f'(\overline \beta)+-\frac{\overline
\beta^3}{3}+\frac{\overline \beta^2}{4}-\frac{1}{d}
 =  -\frac{\overline \beta^3}{3}+\frac{\overline \beta^2}{2}-\frac{\overline \beta}{4} +
 \frac{1}{24}
 >  0,
 \end{align*}
when $0< \overline \beta <\frac{1}{2}$. As $f'(\ob)>0$, the left hand side of (\ref{eq:rest<0}) is not less than $0$ when $\frac{H^2\ch_1(E)}{H^3\ch_0(E)}\geq \frac{1}{2}$.  Therefore, the point $\widetilde v_H(E)$ has $\frac{H^2\ch_1}{H^3\ch_0}$-coordinate less than $\frac{1}{2}$ and is above the tangent line of $\dt_0$ at $\widetilde v_H(\mathcal O(H))$.


\begin{center}

\scalebox{1.2}{
\tikzset{%
    add/.style args={#1 and #2}{
        to path={%
 ($(\tikztostart)!-#1!(\tikztotarget)$)--($(\tikztotarget)!-#2!(\tikztostart)$)%
  \tikztonodes},add/.default={.2 and .2}}
}

\begin{tikzpicture}[domain=2:1]
\newcommand\XA{0.1}
\newcommand\obe{0.2}

\coordinate (E) at (0.8,0.16);
\node [above left,font=\footnotesize] at (E) {$E$};
\node at (E) {$\bullet$};

\coordinate (B) at (\obe,\obe*\obe/2);
\node at (B) {$\bullet$};
\node[below,font=\footnotesize] at (B) {$\overline{\beta}(E)$};

\draw [add =0 and 1,opacity =0.5] (E) to (B);




\draw[->] [opacity=\XA] (-1,0) -- (,0) node[above right] {}-- (2.5,0) node[above right, opacity =1,font=\footnotesize] {$\frac{H^2\ch_1}{H^3\ch_0}$};

\draw[->][opacity=\XA] (0,-1.5)-- (0,0) node [above left, opacity=1,font=\footnotesize] {$\mathcal O$} --  (0,2.5) node[right, opacity=1,font=\footnotesize] {$\frac{H\,\ch_2}{H^3\ch_0}$};

\draw [thick](-1,0.5) parabola bend (0,0) (2,2) node [above, opacity =1,font=\footnotesize] {};

\draw (-1,0) -- (1.5,0);

\draw (2,2) -- (0.5,-1);

\draw (1,0) node [below right,font=\footnotesize] {$\frac{1}{2}$};
\draw (2,2) node [below right,font=\footnotesize] {$\mathcal O(H)$};

\end{tikzpicture} 
}\\
Cartoon: $\tv_H(E)$ is above the tangent lines at $\tv_H(\mathcal O)$ and $\tv_H(\mathcal O(H))$.
\end{center}


Consider the function $g(x): = \sqrt{\frac{3}{2d}}f'(x)+f(x)$, we have
\[g(0) = \sqrt{\frac{3}{2d}}\Big(
\frac{1}{12}+\frac{2}{d}\Big)-\frac{1}{d} =
\sqrt{\frac{3}{2d}}\Big(\sqrt{\frac{2}{d}}-\frac{1}{2}\sqrt{\frac{1}{3}}\Big)^2>0.\]
The derivative of $g(x)$ is
\begin{align*}
g'(x) & =
\frac{x^2}{2}-\Big(\frac{1}{2}-\sqrt{\frac{3}{2d}}\Big)x+\frac{1}{12}+\frac{2}{d}-\frac{1}{2}\sqrt{\frac{3}{2d}}\\
 & \geq g'\Big(\frac{1}{2}-\sqrt{\frac{3}{2d}}\Big) =
 -\frac{1}{2}\Big(\frac{1}{2}-\sqrt{\frac{3}{2d}}\Big)^2+\frac{1}{12}+\frac{2}{d}-\frac{1}{2}\sqrt{\frac{3}{2d}}\\
 & = \frac{5}{4d} - \frac{1}{24} > 0.
\end{align*}
The last inequality is because of $d$
$\leq$ $22$. Therefore, $g(x)>0$, when $x>0$. Since $f'(x) >0$, this implies
$\Big(\frac{f(x)}{f'(x)}\Big)^2 < \frac{3}{2d}$. Now we may estimate
the reduced discriminant of $E$:
\begin{align*}
\widetilde \Delta_H(E) = \widetilde \Delta^{\ob}_H(E) =
\left(
\frac{H^2\czy}{H^3\czl}\right)^2
\leq \left( \frac{-f(\overline \beta)}{f'(\overline \beta)}\right)^2
<\frac{3}{2d}.
\end{align*}

Therefore, $\widetilde v_H(E)$ is in $R_{\frac{3}{2d}}$, this leads the contradiction to Proposition
\ref{lemma:bgcone}. The rest case when $\frac{1}{2}<\overline\beta<1$ and $\ch^{\overline
\beta}_0(E)<0$ is proved in the same way.\\

We now treat with the case that $\ob(E)$ is an integer. We may assume that $\ob(E)=0$. 

If Hom$(E,\mathcal O[1])= 0$, then $\chi(\mathcal O(H),E)\leq 0$ and by the same argument as above, this will imply $\ch_3(E)\leq 0$. If Hom$(\mathcal O, E)=0$, then $\chi(\mathcal O,E)\leq 0$ and by a similar argument as above, this will imply $\ch_3(E)\leq 0$. Therefore, $\ch_3(E)>0$ implies both Hom$(\mathcal O, E)\neq 0$ and Hom$(E,\mathcal O[1])\neq 0$.

Suppose there exists a $\ob$-stable object $E$ such that $\ob(E)=0$ and  $\ch_3(E)>0$, among all such objects, we assume that $E$ has the minimum discriminant $\db_H$, in other words, $E$ has the minimum $|H^2\ch_1|$. By taking the derived dual R$\mathcal Hom(E,\mathcal O)[1]$ if necessary, we may assume that  $\ch_0(E)\geq 0$.  There is a negative number $c$ such that for any $c<\beta<0$, $E$ and $\mathcal O$ are both in the heart Coh$_\beta(X)$. By assumption, there exists a non-zero $\iota$ in Hom$(E,\mathcal O[1])$ inducing a non-trivial extension $F$ of $E$ by $\mathcal O$. The extension $F$ does not depend on the choice of $\beta$.

\begin{lemma}
The object $F$ constructed as above is $\ob$-stable.
\label{lemma:Fisobstable}
\end{lemma}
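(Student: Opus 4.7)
The plan is to show $F$ is $\ob$-stable by contradiction, exploiting the tilt-stability of $\mathcal O$ and $E$, the non-triviality $\iota\neq 0$ of the extension class, and the strictness of tilt-stability on walls.

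First I compute the invariants of $F$. Since $\ob(E)=0$ and $\ch_0(E)\geq 0$, the point $\tv_H(E)$ lies on the tangent line to $\dt_0$ at $\tv_H(\mathcal O)=(1,0,0)$, i.e.\ the line $\{H\ch_2=0\}$; by additivity of $\overline v_H$ on extensions, $\tv_H(F)$ lies on the same line. Hence $\ob(F)=0$, $\db_H(F)=(H^2\ch_1(E))^2=\db_H(E)$, and $\ch_3^{\ob(F)H}(F)=\ch_3(\mathcal O)+\ch_3(E)=\ch_3(E)>0$. In particular $F$ is a priori another candidate counterexample with the same discriminant as $E$.

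Now suppose $F$ is not $\ob$-stable. By local finiteness of tilt walls the only potential wall for $F$ whose closure contains $(0,0)$ is the one along $L_{\mathcal OE}$, since this is the limit of lines through $\tv_H(F)$ and kernel points $P=(1,\beta,(\alpha^2+\beta^2)/2)\to (1,0,0)$. Any destabilizing subobject $G\hookrightarrow F$ accumulating towards $(0,0)$ therefore satisfies $\tv_H(G)\in L_{\mathcal OE}$. I would then exploit the short exact sequence $0\to\mathcal O\to F\to E\to 0$ in $\mathrm{Coh}_\beta(X)$, which restricts to $0\to G\cap\mathcal O\to G\to I\to 0$ with $I\hookrightarrow E$. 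Strict tilt-stability of $\mathcal O$ (Proposition 3.10 in \cite{BMS}) and the fact that proper subobjects of $\mathcal O$ supported on the tangent wall line share its slope force $G\cap\mathcal O\in\{0,\mathcal O\}$. Similarly, $\ob$-stability of $E$ forces $I\in\{0,E\}$. Consequently $G\in\{0,\mathcal O,E,F\}$; the case $G\cong E$ would yield a section $E\hookrightarrow F$ of the surjection $F\twoheadrightarrow E$, splitting the extension and contradicting $\iota\neq 0$.

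The only remaining candidate is $G=\mathcal O$. A direct comparison of the rays $l^+_{P\mathcal O}$ and $l^+_{PF}$ via Lemma \ref{lemma:slopecompare} shows $\nab(\mathcal O)<\nab(F)$ on one side of $L_{\mathcal OE}$ (say $\beta<0$), so $\mathcal O$ does not destabilize $F$ there; the opposite side is handled symmetrically by passing to $F[1]$. Hence $F$ or $F[1]$ is $\nab$-tilt-stable throughout a punctured neighbourhood of $(0,0)$, proving the claimed $\ob$-stability. The main obstacle is the classification step: because $\mathcal O$ sits on the boundary $\dt_0$, the tangent plane to $\db_H=0$ in $K_{\mathbb R}(X)$ along $L_{\mathcal OE}$ is only positive semi-definite, so the usual Bogomolov-type decomposition argument used in Proposition \ref{lemma:bgcone} gives no strict inequality on discriminants along the wall; one must instead appeal directly to the strict tilt-stability of the building blocks $\mathcal O$ and $E$ to exclude exotic subobjects $G\cap\mathcal O$ and $I$ on the wall line.
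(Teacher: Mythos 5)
Your proposal diverges from the paper's argument at the decisive step, and the divergence introduces a genuine gap.

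The opening moves are fine and parallel the paper: $\tv_H(F)$ lies on the horizontal axis, $\ob(F)=0$, $\ch_3(F)=\ch_3(E)>0$, and any failure of $\ob$-stability must occur along the wall $L_{\mathcal O E}$ through $(1,0,0)$ (the paper reaches the same conclusion by squeezing the Harder--Narasimhan factors $F_i$ of $F$ between $\nu_{\alpha,0}(F)$ and $\nu_{\alpha,0}(E)$, using $\Hom(F_i,\mathcal O)=0$ and hence $\Hom(F_i,E)\neq 0$, so that $\tv_H(F_i)\in l_{EF}$). The problem is your classification of the destabilizer $G$ via $0\to G\cap\mathcal O\to G\to I\to 0$. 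Neither of the two dichotomies you assert is justified: tilt-stability of $\mathcal O$ does not force $G\cap\mathcal O\in\{0,\mathcal O\}$ (for instance an ideal sheaf of points is a proper subobject of $\mathcal O$ in the relevant heart with the same reduced character $\tv_H$), and $\ob$-stability of $E$ certainly does not force the image $I\hookrightarrow E$ to be $0$ or all of $E$ --- stability constrains the \emph{slopes} of subobjects, not their existence. A destabilizing $G$ could perfectly well surject onto a proper subobject $I\subsetneq E$ with $\ch_2(I)=0$ and $0<H^2\ch_1(I)<H^2\ch_1(E)$, and nothing in your argument excludes this.

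What closes this gap in the paper is precisely the ingredient your proof never uses: the minimality of $|H^2\ch_1(E)|$ among $\ob$-stable objects with $\ob=0$ and $\ch_3>0$. Once one knows every Jordan--H\"older factor $K_i$ of $F$ on the wall is itself $\ob$-stable with $\ob(K_i)=0$ and $0\leq H^2\ch_1(K_i)\leq H^2\ch_1(F)$, minimality forces every factor with $\ch_3>0$ to have $H^2\ch_1=H^2\ch_1(E)$; since the $\ch_1$'s add up to $\ch_1(E)$, exactly one factor $K_l=F_1$ carries all of $\ch_1$ and the rest have $\ch_1=0$, $\ch_3\leq 0$. This pins $\tv_H(F_1)$ to $\{\tv_H(E),\tv_H(F)\}$, and only then does the non-splitting of the extension (the identity of $E$ mapping to $\iota\neq 0$ under $\Hom(E,E)\to\Hom(E,\mathcal O[1])$) finish the job --- the role it plays in your sketch when you exclude $G\cong E$, but applied after the numerical reduction rather than in place of it. Without invoking minimality, the lemma as stated is not accessible by your route.
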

\begin{proof}[Proof of Lemma \ref{lemma:Fisobstable}]
We first show that the object $F$ is in Coh$_0(X)$. For any $H$-slope stable sheaf $G$ such that $\mu_H(G)\leq 0$, Hom$(\mathcal O, G)\neq 0$ only when $G=\mathcal O$. Since $E$ is in Coh$_0(X)$, Hom$(E, G)=0$. But since $\iota$ is non-zero, Hom$(F,\mathcal O)=0$. Apply Hom$(-,G)$ to the exact triangle
\[\mathcal O\rightarrow F\rightarrow E\xrightarrow{+} \tag{$\bigstar$} \label{eq1}\]
we get Hom$(F,G)=0$. Therefore, $\HH^0(F)$ has no quotient object with non-positive $\mu_H$ slope. Since $\HH^{-1}(F)$ is the kernel of $\HH^{-1}(E)\rightarrow \mathcal O$ and $E$ is in Coh$_0(X)$, it has no subobject with positive slope.  By the definition of Coh$_0(X)$, $F$ is in Coh$_0(X)$.

Assume that $F$ is not $\ob$-stable, then there is a filtration of $F$:
\[0=F_0\subset F_1\subset\dots\subset F_m=F\]
in Coh$_0(X)$ such that for any $\alpha$ small enough, each factor  $K_i:=F_{i}/F_{i-1}$ is $\nu_{\alpha,0}$ tilt-stable and $\nu_{\alpha,0}(K_{i+1}) \leq \nu_{\alpha,0}(K_{i})$. As $\mathcal O[1]$ and $F_i$ are in a same heart Coh$_0(X)$,
\[\Hom(F_i,\mathcal O) = \Hom(F_i,(\mathcal O[1])[-1]) = 0.\]
Since $\Hom(F_i,F)\neq 0$ and $F$ is the extension of $E$ by $\mathcal O$, $\Hom(F_i,E)\neq 0$. Since $E$ is $\ob$-stable,
\[\nu_{\alpha,0}(F)\leq \nu_{\alpha,0}(F_i)\leq \nu_{\alpha,0}(E)\]
for $\alpha$ small enough. By Lemma \ref{lemma:slopecompare} and the inequalities above, $\tv_H(F_i)$ is well-defined and has to be on the line segment $l_{EF}$. Therefore, $\ch_2(K_i)=0$ for all $1\leq i\leq n$. By the definition of $\ob$, $\ob(K_i)=0$ for all $1\leq i\leq n$. By Lemma \ref{lemma:stabonthewholewall} and the definition of $\ob$-stable, each $K_i$ is $\ob$-stable.

Since $H^2\ch_1(K_i)\geq 0$ and $\sum_i \ch_1(K_i)=\ch_1(F)$, $0\leq H^2\ch_1(K_i)\leq H^2\ch_1(F)$ for any $1\leq i\leq n$. Since $\sum_i \ch_3(K_i)=\ch_3(F)=\ch_3(E)>0$, there exists $K_i$ with positive $\ch_3$. By the minimum assumption on $|H^2\ch_1|$ of $E$, the only possible case is that $\ch_1(K_l)=\ch_1(E)$ for an $l$ and $\ch_1(K_i)=0$, $\ch_3(K_i)\leq 0$ when $i\neq l$. $K_l$ mush be the first factor $K_1(=F_1)$ since otherwise $\tv_H(F_1)$ is at the origin and is not on the line segment $l_{EF_i}$. Since $\ch_1(F_1)=\ch_1(E)=\ch_1(F)$ and $\mathrm{rk}(F)-\mathrm{rk}(E)=1$, $\tv_H(F_1)$ is either the same as $\tv_H(F)$ or $\tv_H(E)$. If $\tv_H(F_1)=\tv_H(F)$, then either $F$ is $\ob$-stable or $\nu_{\alpha,0}(F_1)\geq \nu_{\alpha,0}(F/F_1)=+\infty$, either case will lead to a contradiction. If $\tv_H(F_1)=\tv_H(E)$, then
since $E$ and $F_1$ are both $\ob$-stable, this can happen only when $F_1\hookrightarrow E\rightarrow C$ in Coh$_0(X)$ and the cokernel $C$ is zero or a sheaf supported on dimension $0$. Since $\ch_3(K_i)\leq 0$ when $i>1$, $\ch_3(F_1)\geq \ch_3(E)$. Therefore $F_1$ must be $E$. Now apply $\Hom(F_1,-)=\Hom(E,-)$ to the vanishing triangle (\ref{eq1}), we get the exact sequence:
\[\Hom(E,\mathcal O)\rightarrow \Hom(E,F)\rightarrow \Hom(E,E)\rightarrow \Hom(E,\mathcal O[1]).\]
Since $\Hom(E,E)=\mathbb C$ and the image of its identity in $\Hom(E,\mathcal O[1])$, which is $\iota$, is nonzero, $E$ is not a subobject of $F$. This leads to the contradiction, and $F$ is $\ob$-stable.

\end{proof}

Back to the proof of the main theorem: The Chern characters of $F$ are $(\ch_0(E)+1$, $\ch_1(E)$, $0$, $\ch_3(E))$. Since $F$ still satisfies the minimum assumption on $|\ch_1(F)|$, by Lemma \ref{lemma:Fisobstable}, we may construct a sequence of $\ob$-stable objects with Chern characters \[\left(\ch_0(E)+n, \ch_1(E), 0, \ch_3(E)\right).\]
When $n\gg 0$, this character is on the boundary of $R_{\frac{3}{2d}}$, this leads the contradiction to Proposition \ref{lemma:bgcone}.
\end{proof}

\bibliographystyle{abbrv}\bibliography{exc}

\end{document}